\newtheorem{theorem}{Theorem}
\theoremstyle{plain}
\newtheorem{example}{Example}
\newtheorem{lemma}{Lemma}
\numberwithin{equation}{section}
\begin{document}
\title[Sierpinski--Hamming?]{Can the Sierpinski Graph be Embedded in the Hamming Graph?}
\author{Lawrence Hueston Harper}
\address{Department of Mathematics\\
University of California-Riverside}
\email{harper@math.ucr.edu}
\date{November 8, 20015}
\subjclass[2000]{Primary 05C38, 15A15; Secondary 05A15, 15A18}
\keywords{Graph embedding, Sierpinski graph, Hamming graph}

\begin{abstract}
The (generalized \& expanded) Sierpinsi graph, $S(n,m)$, and the Hamming
graph, $K_{m}^{n}$, have the same set of vertices, $\left\{
0,1,...,m-1\right\}  ^{n}$. The edges of both are (unordered) pairs of
vertices. Each set of edges is defined by a different property so that neither
contains the other. We ask if there is a subgraph of $K_{m}^{n}$ isomorphic to
$S(n,m)$ and show that the answer is yes. The recursively defined embedding
map, $\varphi:S(n,m)\rightarrow K_{m}^{n}$, leads to a number of variations
and ramifications. Among them is a simple algebraic formula for the solution
of the Tower of \ Hanoi puzzle.

\end{abstract}
\dedicatory{In memory of my father,\\\textbf{Hueston Maxwell Harper}\\1910-2005\\Dad was thrown out of his high school algebra class for asking questions such
as, "What is algebra?" and "What do 'x' and 'y' mean?" He never went back. But
he loved manipulable puzzles. I was about six when he showed me how to
untangle the Chinese Rings (a physical manifestation of the Sierpinski graph).}\maketitle

\section{ Introduction}

\subsection{Background}

The standard architecture (connection graph) for\ a multiprocessor computer is
the $n$-dimensional cube with $n=6$ or $7$ (so the number of processors is 64
or 128). Recently, the "Sierpinski gasket pyramid graph" has been proposed by
William, Rajasingh, Rajan \& Shantakumari \cite{W-R-R-S} as an alternative (to
the $n$-cube) . After determining the elementary graph-theoretic properties of
the Sierpinski gasket pyramid graph, W-R-R\&S proposed studying its "message
routing and broadcasting" properties. This is the first of several papers
following up on that suggestion.

Our goal with this paper has been to learn more about the structure of
Sierpinski graphs. Our goal for the next paper is to solve the
edge-isoperimetric problem ($EIP$) on the (generalized \& expanded) Sierpinski
graph, $S(n,m)$. $S(n,m)$ has the same vertex set as $K_{m}^{n}$, the Hamming
graph (see below for definitions), but fewer edges, so it was natural to ask
if $S(n,m)$ could be embedded in $K_{m}^{n}$. If so, it would provide a
context for $S(n,m)$ which could lead to insight. The structure of $K_{m}^{n}$
is relatively transparent, its $EIP$ has been solved (see \cite{Har04}, p.
112) and it is closely related to the $n$-cube (the Sierpinski gasket pyramid
graph's competition for multiprocessor architecture). Embedding $S(n,m)$ into
$K_{m}^{n}$ turned out to be an interesting and challenging problem in its own
right and its solution has unexpected ramifications.

\subsection{Definitions}

\subsubsection{Graphs}

An \textit{ordinary} \textit{graph, }$G=\left(  V,E\right)  $ consists of a
set $V$, of \textit{vertices} and a set $E\subseteq\binom{V}{2}$, of pairs of
vertices called \textit{edges}.

\begin{example}
\bigskip$K_{m}$, \textit{the complete graph on }$m$ vertices has $V_{K_{m}%
}=\left\{  0,1,2,...,m-1\right\}  $ and $E_{K_{m}}=\binom{V_{K_{m}}}{2} $.
\end{example}

\begin{example}
The (disjunctive) product, $K_{m}\times K_{m}\times...\times K_{m}=K_{m}^{n} $
is called the Hamming graph. Two vertices ($n$-tuples of vertices of $K_{m} $)
have an edge between them if they differ in exactly one coordinate
(\textit{i.e. }are at Hamming distance $1$).
\end{example}

\subsubsection{The \textit{(Generalized \& Expanded) Sierpinski Graph,
}$S(n,m)$}

$S(n,m)$, $n\geq1$, $m\geq2$ was defined in 1944 by Scorer, Grundy and Smith
\cite{S-G-S}: $V_{S(n,m)}=\left\{  0,1,...,m-1\right\}  ^{n}$. For $\left\{
u,v\right\}  \in\binom{V_{S(n,m)}}{2}$, $\left\{  u,v\right\}  \in E_{S(n,m)}$
iff $\exists h\in\left\{  1,2,...,n\right\}  $ such that following 3
conditions hold:

\begin{enumerate}
\item $u_{i}=v_{i}$ for $i=1,2,...h-1$;

\item $u_{h}\neq v_{h}$; and

\item $u_{j}=v_{h}$ and $v_{j}=u_{h}$ for $j=h+1,...,n$.
\end{enumerate}

The motivation for defining $S(n,m)$ was that $S(n,3)$ is the graph of the
$3$-peg Tower of Hanoi puzzle with $n$ disks \cite{S-G-S}. Scorer, Grundy and
Smith also pointed out that the graph defined by topologist Wac\l sav
Sierpinski in 1915, known as the Sierpinski gasket graph, is a contraction of
$S(n,3)$ where every edge of $S(n,3)$ not contained in a triangle ($K_{3}$) is
contracted to a vertex. The Sierpinski gasket pyramid (aka the Sierpinski
sponge), is a $3$-dimensional analog of the Sierpinski gasket, and its graph
is a similar contraction of $S(n,4)$. Jakovac \cite{Jak} generalized the
construction to $S[n,m]$, the contraction of $S(n,m)$ in which every edge of
$S(n,m)$ not contained in a $K_{3}$ is contracted to a vertex. He worked out
some properties of $S[n,m]$ such as hamiltonicity and chromatic number
($\chi\left(  S[n,m]\right)  =m$).

It seems apparent that the nomenclature of these structures is a muddle. First
came the Sierpinski gasket, a topological curiosity, and then the family of
graphs ($S\left[  n,3\right]  $) that comprise the boundary of the "gasket" at
the $n^{th}$ stage of its construction. The "gasket", a 2-dimensional
structure, was generalized to $m$ dimensions by replacing the equilateral
triangle that Sierpinski started with, by an $m$-simplex. If $S[n,3]$ is the
Sierpinski (gasket) graph, then $S[n,m]$ should be called the "generalized
Sierpinski graph". Scorer, Grundy \& Smith noted that $S\left[  n,3\right]  $
could be obtained from the Tower of Hanoi graph, $S\left(  n,3\right)  $, by
contracting certain edges to a single vertex. Conversely, $S\left(
n,3\right)  $ is derived from $S\left[  n,3\right]  $ by expanding certain
vertices into an edge (and two vertices) and $S(n,m)$ is a generalization of
that construction. For many purposes $S(n,m)$ is easier to deal with (than
$S[n,m]$) because its vertices and edges are easily defined. So we propose
calling $S(n,m)$ the "generalized \& expanded Sierpinski graph" or "Sierpinski
graph" for short (when the meaning is clear from context).

\section{Structure of $S(n,m)$}

\subsection{The Basics}

$\left\vert V_{S(n,m)}\right\vert =m^{n}$. All $v\in V_{S(n,m)}$ have $m-1$
"interior" neighbors. These are the $n$-tuples that agree with $v$ in all
coordinates except the $n^{th}$ (the case $h=n$ in the definition of edges in
$S(n,m)$). If $v\neq i^{n}$ then $v$ has one other ("exterior") neighbor: If
$v\neq i^{n}=\left(  i,i,...,i\right)  $, then $\exists h,1\leq h<n$, such
that $v_{h}\neq v_{h+1}=v_{h+2}=...=v_{n}$ and by definition the exterior
neighbor of $v$ is $u=v_{1}v_{2}...v_{h-1}v_{h+1}v_{h}v_{h}...v_{h}$ (note
that this relationship between $u$ and $v$ is symmetric). Thus $i^{n}$, with
$i=0,1,2,...,m-1$, has degree $m-1$ and every other vertex has degree $m$.
Summing the degrees of all vertices we get $m\left(  m-1\right)  +\left(
m^{n}-m\right)  m=$ $m^{n+1}-m$. Since each edge is incident to two vertices,
$\left\vert E_{S(n,m)}\right\vert =\left(  m^{n+1}-m\right)  /2$.

The vertices that agree in all except the last coordinate induce a complete
subgraph, $K_{m}$. There are $m^{n-1}$ such $K_{m}$'$s$, nonoverlapping and
containing all the vertices of $S(n,m)$. This constitutes a $K_{m}%
$-decomposition of $S(n,m)$. Since any vertex is incident to at most one
exterior edge, any triangle ($K_{3}$) must contain at least two internal
edges. But then the third edge would also be internal to the same $K_{m}$, so
our $K_{m}$-decomposition is unique. The vertices $i^{n}$, for
$i=0,1,2,...,m-1$, are called \textit{corner} vertices of $S(n,m)$.

\begin{example}
$S(n,2)$ is just a path of length $2^{n}$. Its endpoints are the two corner
vertices, $0^{n}$ \& $1^{n}$. Every other vertex is incident to two edges, one
coming from (the direction of) $0^{n}$ and the other going toward $1^{n}$.
That it is a single path (and not a path plus disjoint cycles) is shown by the
observation that the two vertices, $u,v$, connected by the edge $\left\{
u,v\right\}  $ are binary representations of two consecutive integers. One of
them, say $u$, has $u_{h}=0$ and the other $v_{h}=1$. The last $n-h+1$
components of $u$ contribute $2^{n-h-1}+2^{n-h-2}+...+1=2^{n-h}-1$ to its
binary representation whereas that of $v$ contributes $2^{n-h}$.
\end{example}

Figure 1 shows diagrams of $S(n,2)$ for $n=1,2,3$ with coordinates.
\[%
{\parbox[b]{2.8919in}{\begin{center}
\includegraphics[
trim=2.037533in 5.776840in 1.600797in 3.493177in,
height=1.0533in,
width=2.8919in
]%
{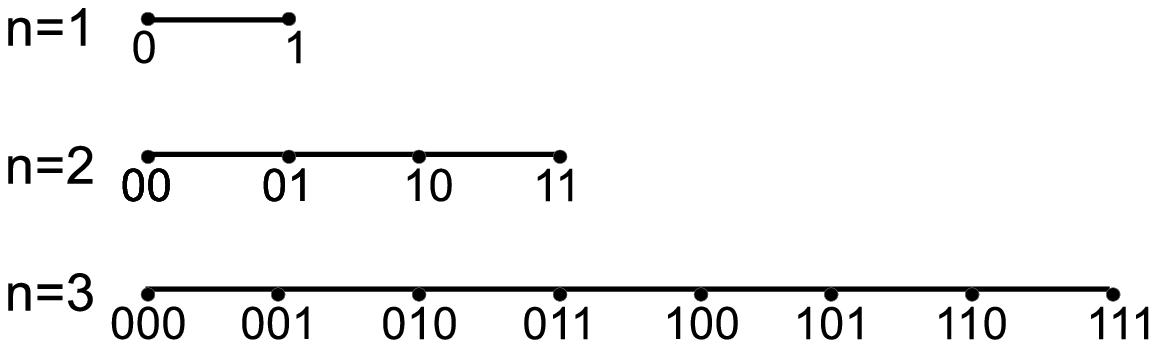}%
\\
Figure 1-$S(n,2)$ for $n=1,2,3$
\end{center}}}%
\]
Figure 2 shows diagrams of $S(n,2)$ for $n=1,2,3$ with coordinates.
\[%
{\parbox[b]{2.9914in}{\begin{center}
\includegraphics[
trim=1.200598in 2.406925in 1.367135in 1.205113in,
height=3.7239in,
width=2.9914in
]%
{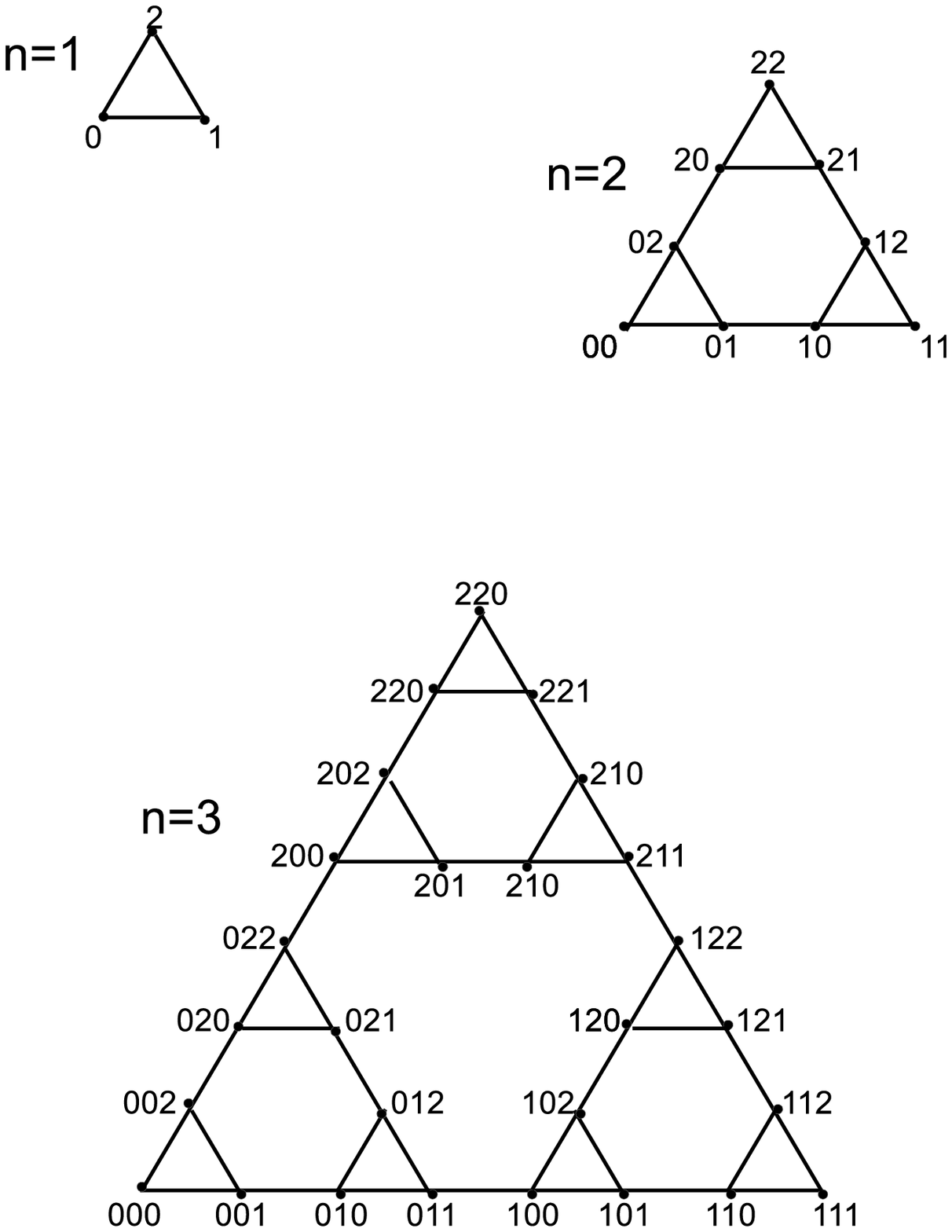}%
\\
Figure 2-$S(n,3)$ for $n=1,2,3$
\end{center}}}%
\]

\subsection{Symmetry of $S(n,m)$}

\begin{theorem}
(Theorem 4.14 of \cite{H-K-M-P}) The symmetry group of $S(n,m)$ is
$\mathcal{S}_{m}$, the symmetric group on $m$ generators.

\begin{proof}
Let $\pi$ be any permutation of $\left\{  0,1,2,...,m-1\right\}  $, and let it
act on the vertices of $S(n,m)$ in the obvious way,
\[
\sigma_{\pi}(v)=\sigma_{\pi}\left(  v_{1},v_{2},...,v_{n}\right)  =\left(
\pi\left(  v_{1}\right)  ,\pi\left(  v_{2}\right)  ,...,\pi\left(
v_{n}\right)  \right)  .
\]
If $\left\{  u,v\right\}  \in E_{S(n,m)}$ then
\begin{align*}
\sigma_{\pi}(u)  &  =\sigma_{\pi}\left(  u_{1},u_{2},...,u_{n}\right) \\
&  =\sigma_{\pi}\left(  u_{1},u_{2},...,u_{h-1},u_{h},u_{h+1},u_{h+1}%
,...,u_{h+1}\right)  \text{, where }u_{h}\neq u_{h+1}\text{,}\\
&  =\left(  \pi\left(  u_{1}\right)  ,\pi\left(  u_{2}\right)  ,...,\pi\left(
u_{h-1}\right)  ,\pi\left(  u_{h}\right)  ,\pi\left(  u_{h+1}\right)
,\pi\left(  u_{h+1}\right)  ,...,\pi\left(  u_{h+1}\right)  \right)
\end{align*}
and%
\begin{align*}
\sigma_{\pi}(v)  &  =\sigma_{\pi}\left(  v_{1},v_{2},...,v_{n}\right) \\
&  =\sigma_{\pi}\left(  v_{1},v_{2},...,v_{h-1},v_{h},v_{h+1},v_{h+1}%
,...,v_{h+1}\right) \\
&  =\sigma_{\pi}\left(  u_{1},u_{2},...,u_{h-1},u_{h+1},u_{h},u_{h}%
,...,u_{h}\right) \\
&  =\left(  \pi\left(  u_{1}\right)  ,\pi\left(  u_{2}\right)  ,...,\pi\left(
u_{h-1}\right)  ,\pi\left(  u_{h+1}\right)  ,\pi\left(  u_{h}\right)
,\pi\left(  u_{h}\right)  ,...,\pi\left(  u_{h}\right)  \right)  \text{.
\ \ \ }%
\end{align*}
So $\left\{  \sigma_{\pi}\left(  u\right)  ,\sigma_{\pi}\left(  v\right)
\right\}  \in E_{S(n,m)}$ and $\sigma_{\pi}$ is a symmetry of $S(n,m)$.

Conversely, suppose $\sigma$ is a symmetry of $S(n,m)$. Any symmetry, $\sigma
$, of $S(n,m)$ must take corners to corners so $\sigma\left(  i^{n}\right)
=j^{n}$ for some $j\in\left\{  0,1,...,m-1\right\}  $. We then define a
permutation on $\left\{  0,1,...,m-1\right\}  $ by $\pi_{\sigma}\left(
i\right)  =j$. We claim that when this action is extended as above, we get
$\sigma$ back again: By induction on $h$ (in the definition of external
edges). For $h=0,\sigma\left(  i^{n}\right)  =j^{n}=\pi_{\sigma}\left(
i\right)  ^{n}$. For $h=1$, if $\sigma\left(  ij^{n-1}\right)  =\left(
k_{1},k_{2},...,k_{n}\right)  $ then since the subgraph $\left\{  i\right\}
\times S(n-1,m)$ contains $i^{n}$, its image under $\sigma$ must contain
$\pi_{\sigma}\left(  i\right)  ^{n}$ and be $\left\{  \pi_{\sigma}\left(
i\right)  \right\}  \times S(n-1,m)$. Therefore $k_{1}=\pi_{\sigma}\left(
i\right)  $. $\sigma\left(  ij^{n-1}\right)  $ must be a corner vertex of
$\left\{  \pi_{\sigma}\left(  i\right)  \right\}  \times S(n-1,m)$ which means
that $\left(  k_{2},...,k_{n}\right)  =k^{n-1}$ for some $k.$ Since
$ij^{n-1}\in\left\{  i\right\}  \times S(n-1,m)$ and $ji^{n-1}\in\left\{
j\right\}  \times S(n-1,m)$ are neighbors, $\sigma\left(  ij^{n-1}\right)
\in\left\{  \pi_{\sigma}\left(  i\right)  \right\}  \times S(n-1,m)$ and
$\sigma\left(  ji^{n-1}\right)  \in\left\{  \pi_{\sigma}\left(  j\right)
\right\}  \times S(n-1,m)$ are also neighbors, so $k=\pi\left(  j\right)  $
and $\sigma\left(  ij^{n-1}\right)  =\pi_{\sigma}\left(  i\right)  \pi
_{\sigma}\left(  \ j\right)  ^{n-1}$. The pattern for higher $h$ is the same,
so $\sigma\left(  v_{1}v_{2}...v_{n}\right)  =\pi_{\sigma}\left(
v_{1}\right)  \pi_{\sigma}\left(  v_{2}\right)  ...\pi_{\sigma}\left(
v_{n}\right)  $ and the symmetry induced by $\pi_{\sigma}$ is $\sigma$.
\end{proof}
\end{theorem}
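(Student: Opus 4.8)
The plan is to prove the two inclusions $\mathcal{S}_{m}\subseteq\mathrm{Aut}(S(n,m))$ and $\mathrm{Aut}(S(n,m))\subseteq\mathcal{S}_{m}$ separately: the first by a direct verification that symbol-permutations preserve edges, the second by induction on $n$ using the recursive branch decomposition $S(n,m)=\bigcup_{i=0}^{m-1}\{i\}\times S(n-1,m)$ (where $\{i\}\times S(n-1,m)$ is the set of vertices with first coordinate $i$) together with the degree count established in the Basics. The forward inclusion is the easy half, and I expect essentially all of the work to lie in the converse.

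For the forward inclusion I would observe that the three conditions defining an edge $\{u,v\}$ refer only to which of the coordinates $u_{1},\dots,u_{n},v_{1},\dots,v_{n}$ are equal and which are not; they never mention the actual symbols. Hence applying a fixed bijection $\pi$ of $\{0,1,\dots,m-1\}$ to every coordinate at once preserves each such equality and inequality, with the same index $h$, so $\sigma_{\pi}$ carries edges to edges and is an automorphism. The assignment $\pi\mapsto\sigma_{\pi}$ is a homomorphism, and it is injective because $\sigma_{\pi}(i^{n})=\pi(i)^{n}$, so distinct permutations move the corners differently; this realizes $\mathcal{S}_{m}$ inside $\mathrm{Aut}(S(n,m))$. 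For the converse, let $\sigma$ be any automorphism. The degree computation in the Basics shows that the corners $i^{n}$ are exactly the vertices of degree $m-1$ while every other vertex has degree $m$; since automorphisms preserve degree, $\sigma$ permutes the $m$ corners and thereby determines a permutation $\pi_{\sigma}$ by $\sigma(i^{n})=\pi_{\sigma}(i)^{n}$. Replacing $\sigma$ by $\sigma_{\pi_{\sigma}}^{-1}\circ\sigma$, it suffices to show that an automorphism fixing every corner is the identity. The base case $n=1$ is immediate, since $S(1,m)=K_{m}$ and a vertex-fixing automorphism of $K_{m}$ is trivial. For the inductive step one must first see that $\sigma$ maps each branch $\{i\}\times S(n-1,m)$ onto itself: each branch is a copy of $S(n-1,m)$ containing exactly one global corner, the branch structure is recoverable from the graph alone (e.g. from the uniqueness of the $K_{m}$-decomposition proved in the Basics), so $\sigma$ permutes branches and fixes each because it fixes the corner it contains.

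The main obstacle, and the genuine content of the converse, is to show that the restrictions of $\sigma$ to the different branches are governed by one and the same permutation rather than by independent ones. After the reduction, $\sigma$ fixes every corner and each branch, so by the inductive hypothesis its restriction to $\{i\}\times S(n-1,m)$ is $\sigma_{\rho_{i}}$ for some $\rho_{i}\in\mathcal{S}_{m}$, and $\rho_{i}(i)=i$ because the global corner $i^{n}=i\,i^{n-1}$ is fixed. The branches are glued by exactly one external edge apiece: branch $i$ and branch $j$ meet only in the edge $\{ij^{n-1},ji^{n-1}\}$. Since $\sigma$ preserves this edge and fixes both branches, its image $\{i\,\rho_{i}(j)^{n-1},\,j\,\rho_{j}(i)^{n-1}\}$ must again be that unique inter-branch edge, forcing $\rho_{i}(j)=j$ for every $j\neq i$. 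Hence each $\rho_{i}$ is the identity, $\sigma$ fixes every vertex, and the converse follows. I anticipate that the two fiddly points will be justifying that $\sigma$ respects the branch decomposition and the bookkeeping that pins down $\{ij^{n-1},ji^{n-1}\}$ as the one edge joining branches $i$ and $j$; everything after that is routine.
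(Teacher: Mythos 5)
Your proposal is correct and the forward inclusion is essentially the paper's argument in more abstract clothing: where the paper writes out $\sigma_{\pi}(u)$ and $\sigma_{\pi}(v)$ coordinate by coordinate, you observe that the three defining conditions for an edge mention only equalities and inequalities among symbols, hence are preserved by any bijection of the alphabet. The converse is where you genuinely diverge. The paper keeps $n$ fixed, reads $\pi_{\sigma}$ off the corners, and argues by induction on the depth $h$ of a vertex that $\sigma\left(  v_{1}v_{2}...v_{n}\right)  =\pi_{\sigma}\left(  v_{1}\right)  \pi_{\sigma}\left(  v_{2}\right)  ...\pi_{\sigma}\left(  v_{n}\right)  $, checking $h=0$ and $h=1$ and declaring that ``the pattern for higher $h$ is the same.'' You instead compose with $\sigma_{\pi_{\sigma}}^{-1}$, reduce to showing the corner-stabilizer is trivial, and induct on $n$: the inductive hypothesis classifies the restriction of $\sigma$ to each branch as some $\sigma_{\rho_{i}}$, and the uniqueness of the inter-branch edge $\left\{  ij^{n-1},ji^{n-1}\right\}  $ (immediate from the definition, since $u_{1}\neq v_{1}$ forces $h=1$) pins each $\rho_{i}$ down to the identity. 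This buys you an honest induction in place of the paper's appeal to pattern, at the cost of needing the full classification of $\mathrm{Aut}(S(n-1,m))$ as the inductive hypothesis. Both arguments, however, rest on the same unproved step: that an automorphism carries each branch $\left\{  i\right\}  \times S(n-1,m)$ onto a branch. The paper simply asserts this, and your proposed justification via the unique $K_{m}$-decomposition does not quite deliver it, because that decomposition identifies the cells (the $m^{n-1}$ sets of vertices agreeing in the first $n-1$ coordinates), not the branches, which are unions of $m^{n-2}$ cells. A clean way to close the gap, usable by either proof, is to note that branch $i$ is exactly the ball of radius $2^{n-1}-1$ about the corner $i^{n}$: every vertex of the branch lies within $2^{n-1}-1$ of $i^{n}$ since the branch is a copy of $S(n-1,m)$ with $i^{n}$ as a corner, while any vertex outside it must enter the branch through some $ik^{n-1}$, $k\neq i$, and so lies at distance at least $2^{n-1}$ (this is the distance formula of the paper's Lemma 4, transported to the corner $i^{n}$ by the already-established symmetries $\sigma_{\pi}$). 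That description is intrinsic and automorphism-invariant once corners are known to map to corners, which your degree count supplies.
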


\section{Embedding $S(n,m)$ in $K_{m}^{n}$}

The vertex-sets of $S(n,m)$ and $K_{m}^{n}$ are the same, $\left\{
0,1,...,m-1\right\}  ^{n}$ . However the edge-sets are different:
$E_{K_{m}^{n}}\nsubseteqq E_{S(n,m)}$because
\[
\left\vert E_{K_{m}^{n}}\right\vert =n\left(  m-1\right)  m^{n}/2>\left(
m^{n+1}-m\right)  /2=\left\vert E_{S(n,m)}\right\vert .
\]
Also $\ E_{S(n,m)}\nsubseteqq E_{K_{m}^{n}}$ since the exterior edges of
$S(n,m)$ differ in two or more coordinates. The question of whether $S(n,m)$
can be embedded in $K_{m}^{n}$ is a special case of the subgraph isomorphism
problem, well-known for it complexity (it is NP-complete \cite{G-J}). However,
we now show that such an embedding exists. First we consider a couple of
special cases.

\subsubsection{The Case $m=2$}

We showed in Example 1 that $S(n,2)$ is a path from $0^{n}$ to $1^{n}$. So the
question of embedding of $S(n,2)$ into $K_{2}^{n}=Q_{n}$, the graph of the
$n$-cube, is the graph-theoretic classic, "Does $Q_{n}$ contain a Hamiltonian
path (a path passing through each vertex exactly once). The answer is yes
(there are many, the Gray code being one (see Wikipedia on "Gray code")).

\subsubsection{The Case $n=2$ (Giving $S(n,m)$ a twist)}

Define $\widetilde{S}(n,m)$ as the graph with
\[
V_{\widetilde{S}(n,m)}=\left\{  0,1,...,m-1\right\}  ^{n}%
\]
and
\[
E_{\widetilde{S}(n,m)}=%
{\displaystyle\bigcup\limits_{h=1}^{n}}
\left\{  \left\{  v^{\left(  h-1\right)  }ik^{n-h},v^{\left(  h-1\right)
}jk^{n-h}\right\}  :i\neq j\text{ \& }i+j=k(\operatorname{mod}m)\right\}
\]
where $v^{\left(  h\right)  }\in\left\{  0,1,...,m-1\right\}  ^{h}$. Note that
$\left\vert V_{\widetilde{S}(n,m)}\right\vert =\left\vert V_{S(n,m)}%
\right\vert $ and $\left\vert E_{\widetilde{S}(n,m)}\right\vert =\left\vert
E_{S(n,m)}\right\vert $. Also note that $\widetilde{S}(n,m)$ is a subgraph of
$K_{m}^{n}$ (since $V_{\widetilde{S}(n,m)}=V_{K_{m}^{n}}$ and the pairs of
vertices that comprise an edge of $\widetilde{S}(n,m)$ differ in exactly one
coordinate, making it an edge of $K_{m}^{n}$). However, this does not prove
that $S(n,m)$ is embeddable as a subgraph of $K_{m}^{n}$.

\begin{lemma}
$S(2,m)\cong\widetilde{S}(2,m)$.
\end{lemma}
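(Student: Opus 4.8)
The plan is to prove the lemma by exhibiting an explicit isomorphism $\varphi:S(2,m)\to\widetilde{S}(2,m)$ rather than arguing abstractly, since both graphs have the same vertex set and the same number of edges. First I would unwind both edge-sets in the special case $n=2$. In $S(2,m)$ the defining conditions collapse to two families: the \emph{interior} edges (case $h=2$), which join $(c,i)$ to $(c,j)$ for $i\neq j$ and so partition the vertices into $m$ disjoint copies of $K_m$ sharing a common first coordinate; and the \emph{exterior} edges (case $h=1$), which join $(a,b)$ to $(b,a)$ for $a\neq b$, i.e. the coordinate-swap pairs. For $\widetilde{S}(2,m)$, the $h=2$ term of the union gives exactly the same interior edges $\{(c,i),(c,j)\}$ (here $k^{\,0}$ is empty, so the value $k=(i+j)\bmod m$ imposes no constraint and the full clique appears), while the $h=1$ term gives the ``twisted'' edges $\{(i,k),(j,k)\}$ with $i\neq j$ and $i+j\equiv k\pmod m$, which share a common second coordinate. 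Thus the two graphs already agree on their interior edges and differ only in how the remaining $\binom{m}{2}$ edges are attached.

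The central step is to find the vertex relabeling that converts swap-edges into sum-twisted edges while preserving the interior cliques. I would propose, and then verify, the map $\varphi(a,b)=\bigl(a,\,(a+b)\bmod m\bigr)$, which fixes the first coordinate and shears the second by the first. It is a bijection of $\{0,1,\ldots,m-1\}^{2}$ because $b\mapsto(a+b)\bmod m$ is invertible for each fixed $a$, with inverse $(a,k)\mapsto\bigl(a,(k-a)\bmod m\bigr)$.

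Next I would check the two edge families directly. An interior edge $\{(c,i),(c,j)\}$ maps to $\{(c,(c+i)\bmod m),(c,(c+j)\bmod m)\}$; the first coordinate is unchanged and the two second coordinates stay distinct, so $\varphi$ carries each clique $\{c\}\times\{0,\ldots,m-1\}$ onto itself and sends interior edges bijectively to interior edges. An exterior edge $\{(a,b),(b,a)\}$ maps to $\{(a,k),(b,k)\}$ with $k=(a+b)\bmod m$; the images share the second coordinate $k$, their distinct first coordinates satisfy $a+b\equiv k\pmod m$, and this is precisely the defining condition for an $h=1$ edge of $\widetilde{S}(2,m)$. Since there are $\binom{m}{2}$ exterior edges and $\binom{m}{2}$ twisted edges and $\varphi$ is injective on vertices, this correspondence is itself a bijection, so $\varphi$ carries $E_{S(2,m)}$ onto $E_{\widetilde{S}(2,m)}$ and is a graph isomorphism.

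The only real difficulty is locating $\varphi$; once the correct shear is written down the verifications are routine. I would arrive at it by noting that any isomorphism must preserve the degree-$(m-1)$ vertices: in $S(2,m)$ these are the corners $(i,i)$, while in $\widetilde{S}(2,m)$ they are exactly the vertices $(i,k)$ with $k\equiv 2i\pmod m$ (the ones with no twisted partner, since $j=(k-i)\bmod m$ equals $i$). Forcing $(i,i)\mapsto(i,2i\bmod m)$ then dictates that the second coordinate should increase by the first, which leads straight to the formula above. A secondary point to keep in mind while reading the union defining $\widetilde{S}(2,m)$ is that $k$ is determined by $i$ and $j$ and does not appear in the $h=2$ vertices, so one must read the indexing carefully to confirm that the $h=2$ family really reproduces the full interior clique decomposition.
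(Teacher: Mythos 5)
Your proposal is correct and uses exactly the isomorphism the paper uses: the shear $(a,b)\mapsto(a,(a+b)\bmod m)$ is precisely the map $\varphi^{(2)}$ produced by the recursive construction in Theorem 2, of which the paper declares Lemma 1 to be the $n=2$ special case. Your direct verification of the two edge families (interior cliques preserved, swap-edges sent to sum-twisted edges) is a sound, self-contained instance of that argument.
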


In the next section we will prove a more general theorem. It would seem
natural to extend Lemma 1 to show that $\widetilde{S}(n,m)\cong S(n,m)$. After
many attempts to do so, we ran up against the following counterexample:
Consider the vertex $\left(  0,1,1\right)  $ in $\widetilde{S}(3,3)$. It
shares an edge with the vertices $\left(  0,1,0\right)  $ $\&$ $\left(
0,1,2\right)  $ ($h=3$) as well as the vertices $\left(  0,0,1\right)  $
($h=2$) and $\left(  1,1,1\right)  $ ($h=1$). So $\left(  0,1,1\right)  $ has
degree $4$ in $\widetilde{S}(3,3)$, but the maximum degree of any vertex in
$S(3,3)$ is $3$.

\subsubsection{A Recursive Definition of $S(n,m)$}

The Sierpinski graph, $S(n,m)$, has been defined analytically in Section
1.2.2. However, $S(n,m)$ may also be characterized recursively: $S(1,m)=K_{m}
$ and given $S(n-1,m)$ for $n-1\geq1$, we construct $S(n,m)$ by making $m$
copies, $\left\{  i\right\}  \times S(n-1,m)$, $0\leq i<m$, and throwing in
all edges of the form $\left\{  ij^{n-1},ji^{n-1}\right\}  $ such that $0\leq
i,j<m$ and $i\neq j$. Note that every copy of $S(n-1,m)$ is connected to every
other copy and the corners of $S(n,m)$ are the constant vertices $i^{n},$
$0\leq i<m$.

\subsection{The General Case (Giving $S(n,m)$ another Twist)}

The Scorer-Grundy-Smith definition of the (generalized and extended)
Sierpinski graph may be regarded as a representation or coordinatization of
the abstract graph. Our question of whether $S(n,m)$ may be embedded in
$K_{m}^{n\text{ \ }}$is equivalent to asking whether there is another
definition of that abstract graph; a definition in which the set of vertices,
$\left\{  0,1,...,m-1\right\}  ^{n}$, is the same, but the pairs of vertices,
$\left\{  u,v\right\}  $ that constitute an edge differ in exactly one
coordinate. An obvious candidate was $\widetilde{S}(n,m)$. That worked for
$n=2$ (Lemma 1) but failed for $n=3$.

\begin{theorem}
There exists another coordinatization of $S(n,m)$ (call it $\widetilde
{\widetilde{S}}(n,m)$) that is a subgraph of $K_{m}^{n}$. I.e. the edges of
$\widetilde{\widetilde{S}}(n,m)$ consist of pairs of vertices that differ in
exactly one coordinate.

\begin{proof}
By induction on $n$: It is trivial for $n=1$ ( $\widetilde{\widetilde{S}%
}(1,m)=K_{m}=S(1,m)$). Assume that the theorem is true for $n-1\geq1$.
Implicit in the statement of the theorem is the existence of a function,
$\varphi^{\left(  n\right)  }:\left\{  0,1,...,m-1\right\}  ^{n}%
\rightarrow\left\{  0,1,...,m-1\right\}  ^{n}$ that takes each vertex of
\ $S(n,m)$ to its coordinates in $\widetilde{\widetilde{S}}(n,m)$. We write
this as $\varphi^{\left(  n\right)  }\left(  v_{1}v_{2}...v_{n}\right)
=\left(  \widetilde{\widetilde{v}}_{1},\widetilde{\widetilde{v}}%
_{2},...,\widetilde{\widetilde{v}}_{n}\right)  $. We have represented $S(n,m)$
recursively as
\[
\left\{  0,1,...,m-1\right\}  \times S(n-1,m)
\]
(the disjoint union of $m$ copies of $S(n-1,m)$), along with the external
edges,
\[
\left\{  ij^{n-1},ji^{n-1}\right\}  ,i,j\in\left\{  0,1,...,m-1\right\}
\&i\neq j.
\]
By the inductive hypothesis we have $\varphi^{(n-1)}:S(n-1,m)\rightarrow$
$\widetilde{\widetilde{S}}(n-1,m)$ taking $v\in V_{S(n-1,m)}$ to its
coordinates in $\widetilde{\widetilde{S}}(n-1,m)$. For each $i\in\left\{
0,1,...,m-1\right\}  $ we define%
\[
\varphi_{i}^{\left(  n-1\right)  }:\left\{  i\right\}  \times
S(n-1,m)\rightarrow\left\{  i\right\}  \times\text{ }\widetilde{\widetilde{S}%
}(n-1,m)
\]
by
\[
\varphi_{i}^{\left(  n-1\right)  }\left(  ij^{n-1}\right)  =\left(
i,\varphi^{\left(  n-1\right)  }\left(  \left(  i+j\right)
(\operatorname{mod}m\right)  )^{n-1}\right)  \text{.}%
\]
The $ij^{n-1}$ are only the corner vertices of $\left\{  i\right\}  \times
S(n-1,m)$, but as we showed in Theorem 1, any permutation of the underlying
set, $\left\{  0,1,...,m-1\right\}  $ (and here we are applying the
permutation $j\rightarrow\left(  i+j\right)  \operatorname{mod}m$), extends to
a unique symmetry of $S(n-1,m)$. That extension constitutes $\varphi
_{i}^{\left(  n-1\right)  }$. Now we define
\[
\varphi^{(n)}=%
{\displaystyle\bigoplus\limits_{i=0}^{m-1}}
\varphi_{i}^{\left(  n-1\right)  }\text{,}%
\]
the disjoint union of the component maps, $\varphi_{i}^{\left(  n-1\right)  }%
$, $0\leq i\leq m-1$. We claim then that $\varphi^{(n)}:$ $S(n,m)\rightarrow$
$\widetilde{\widetilde{S}}(n,m)$ is the isomorphism we have been seeking:
Edges internal to $\left\{  i\right\}  \times S(n-1,m)$ are preserved because
$\varphi_{i}^{\left(  n-1\right)  }:\left\{  i\right\}  \times
S(n-1,m)\rightarrow\left\{  i\right\}  \times$ $\widetilde{\widetilde{S}%
}(n-1,m)$ is the composition of two isomorphisms $j\rightarrow\left(
i+j\right)  (\operatorname{mod}m)$ and $\varphi^{(n-1)}$. \ By induction on
$n$, the vertices incident to an edge internal to $\left\{  i\right\}  \times$
$\widetilde{\widetilde{S}}(n-1,m)$ differ in exactly one coordinate. 

The external edges, $\left\{  ij^{n-1},ji^{n-1}\right\}  ,$ are preserved
because%
\begin{align*}
&  \varphi^{(n)}\left(  \left\{  ij^{n-1},ji^{n-1}\right\}  \right)  \\
&  =\left\{  \varphi^{(n)}\left(  ij^{n-1}\right)  ,\varphi^{(n)}\left(
ji^{n-1}\right)  \right\}  \\
&  =\left\{  \varphi_{i}^{(n-1)}\left(  ij^{n-1}\right)  ,\varphi_{j}%
^{(n-1)}\left(  ji^{n-1}\right)  \right\}  \\
&  =\left\{  \left(  i,\varphi^{\left(  n-1\right)  }\left(  \left(  \left(
i+j\right)  (\operatorname{mod}m\right)  )^{n-1}\right)  \right)  ,\left(
j,\varphi^{\left(  n-1\right)  }\left(  \left(  \left(  j+i\right)
(\operatorname{mod}m\right)  )^{n-1}\right)  \right)  \right\}  \\
&  \in E_{\widetilde{\widetilde{S}}(n,m)}\text{ since }\left(  i+j\right)
\text{=}\left(  j+i\right)  \text{.}%
\end{align*}
Thus $\varphi^{(n)}\left(  ij^{n-1}\right)  $ and $\varphi^{(n)}\left(
ji^{n-1}\right)  $ differ in exactly one coordinate (the first).
\end{proof}
\end{theorem}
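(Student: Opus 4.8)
The plan is to induct on $n$, using the recursive description of $S(n,m)$ as $m$ disjoint copies $\{i\}\times S(n-1,m)$, $0\le i<m$, joined by the external edges $\{ij^{n-1},ji^{n-1}\}$ for $i\neq j$. The base case $n=1$ is trivial, since $S(1,m)=K_m$ and any two of its single-coordinate vertices already differ in exactly one coordinate. For the inductive step I would assume an embedding $\varphi^{(n-1)}\colon S(n-1,m)\to K_m^{n-1}$ and construct $\varphi^{(n)}$ so that its first output coordinate records the copy index $i$. Then every edge internal to a copy preserves this first coordinate and lives in the last $n-1$ coordinates, where the inductive hypothesis can take over; only the external edges, which pass between copies, need special care.

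The external edges are the whole difficulty, and the failed candidate $\widetilde S(n,m)$ shows why a uniform gluing rule is not enough: the degree-$4$ vertex $(0,1,1)$ in $\widetilde S(3,3)$ exhibits incidences that $S(n,m)$ does not have. The idea I would use is to \emph{twist each copy by its own symmetry} before applying $\varphi^{(n-1)}$. Concretely, on copy $i$ I would first apply the automorphism of $S(n-1,m)$ induced, via Theorem 1, by the permutation $j\mapsto (i+j)\bmod m$, then apply $\varphi^{(n-1)}$, and finally prepend the constant coordinate $i$. The reason this works is the commutativity of the additive twist: the corner $ij^{n-1}$ of copy $i$ and the corner $ji^{n-1}$ of copy $j$ are both sent, in the last $n-1$ coordinates, to $\varphi^{(n-1)}\big(((i+j)\bmod m)^{n-1}\big)$, and since $i+j=j+i$ these agree. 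Hence the external edge $\{ij^{n-1},ji^{n-1}\}$ maps to a pair of vertices that differ only in the first coordinate ($i$ versus $j$) --- exactly a Hamming edge.

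With the construction fixed, I would finish by checking the two edge-types and bijectivity. Internal edges: on copy $i$ the map is a composition of an automorphism of $S(n-1,m)$ and the inductive embedding $\varphi^{(n-1)}$, both edge-preserving, so such an edge lands as a single-coordinate change among coordinates $2,\dots,n$ with coordinate $1$ held at $i$. External edges: handled by the commutativity computation above. Bijectivity: each twisted copy map is a bijection (a composition of bijections), and the $m$ images occupy disjoint slabs distinguished by their first coordinate, so $\varphi^{(n)}$ is a bijection on the $m^n$ vertices; defining $\widetilde{\widetilde S}(n,m)$ to be its image then makes $\varphi^{(n)}$ a graph isomorphism onto a subgraph of $K_m^n$.

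The step I expect to be the main obstacle is not any of this verification, which is bookkeeping off Theorem 1 and the inductive hypothesis, but the discovery that the copies must be twisted independently and that the correct twist is the additive permutation $j\mapsto(i+j)\bmod m$. The entire content lies in choosing a family of symmetries whose pairing values $i+j$ are symmetric in $i$ and $j$; that symmetry is precisely what forces each external edge to collapse onto a single changed coordinate, and it is exactly what the naive $\widetilde S(n,m)$ lacks.
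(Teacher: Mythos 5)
Your proposal is correct and follows essentially the same route as the paper: induction on $n$ via the recursive decomposition into $m$ copies of $S(n-1,m)$, twisting copy $i$ by the automorphism induced (via Theorem 1) by $j\mapsto(i+j)\bmod m$ before applying $\varphi^{(n-1)}$ and prepending $i$, with the commutativity $i+j=j+i$ handling the external edges. The only difference is that you make the bijectivity check explicit, which the paper leaves implicit.
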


Figure 3 shows $\widetilde{\widetilde{S}}(n,3),n=1,2,3$.%
\[%
{\parbox[b]{2.9265in}{\begin{center}
\includegraphics[
trim=1.197199in 2.483964in 1.504783in 1.206214in,
height=3.6841in,
width=2.9265in
]%
{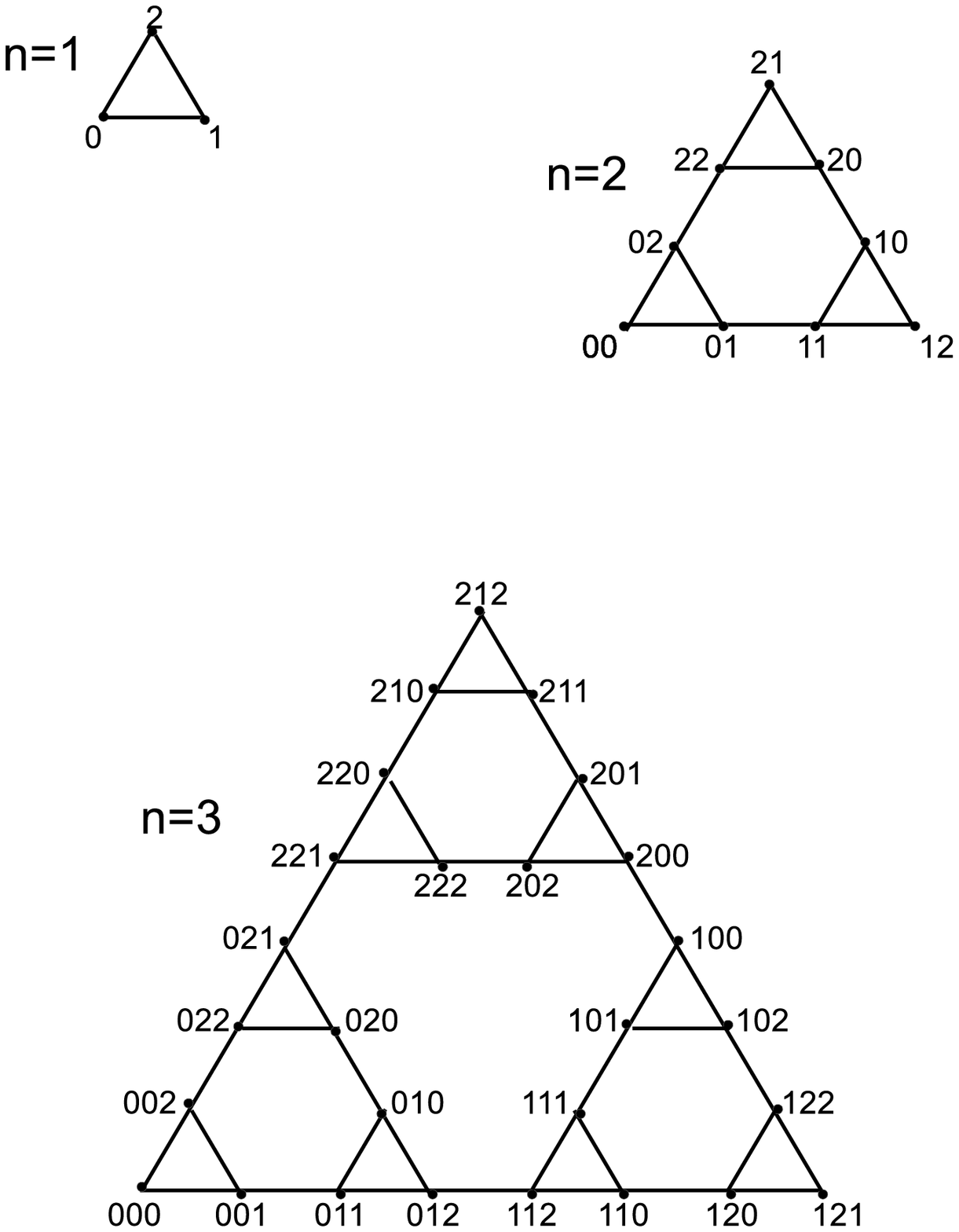}%
\\
Figure 3-$\widetilde{\widetilde{S}}(n,3)$ for $n=1,2,3$
\end{center}}}%
\]

Note that in the case $n=2$, $\varphi^{(1)}$ is the identity on $S(1,m)$ so
$\widetilde{\widetilde{S}}(2,m)=\widetilde{S}(2,m)$ and our proof includes
Lemma 1. Geometric intuition had indicated that to embed $S(n,m)$ into
$K_{m}^{n}$ we had to give the copies of $S(n-1,m)$ a twist, which led to the
definition of $\widetilde{S}(2,m)$. The counterexample showed that just one
twist was not adequate for $S(3,m)$, that we had to give the copies of
$\widetilde{S}(2,m)$ another twist, leading to the definition of
$\widetilde{\widetilde{S}}(3,m)$, \textit{etc. }

\section{Comments and Conclusions}

\subsection{Application to Layout Problems}

Theorem 2 gives the solution of a wirelength problem. In laying out a graph,
$G$, on a graph, $H$, edges of $G$ must be assigned to paths with distinct
endpoints. Since the shortest such path is a single edge, an embedding of $G$
into $H$ achieves the minimum wirelength (the sum of the lengths of all paths
assigned to edges of $G$) and bandwidth (the maximum length of any path
assigned to an edge of $G$) of any layout. Thus $WL\left(  S(n,m)\rightarrow
K_{m}^{n}\right)  =\left\vert E_{S(n,m)}\right\vert =\left(  m^{n+1}-m\right)
/2$ and $BW\left(  S(n,m)\rightarrow K_{m}^{n}\right)  =1$. In a sequel to
this paper we hope to present a solution of the bandwidth problem for laying
out $S(n,m)$ on a path, $P_{m^{n}}$, of length $m^{n}$. The problems of laying
out $S(n,m)$ on a product of paths, $P_{m^{n_{1}}}\times P_{m^{n_{2}}}%
\times...\times P_{m^{n_{k}}}$ such that $n_{1}+n_{2}+...+n_{k}=n$ so as to
minimize wirelength or bandwidth appears very challenging and will probably
require new methods.

\subsection{Relative Density of Edges}

Since $S(n,m)\simeq$ $\widetilde{\widetilde{S}}(n,m)$, a subgraph of
$K_{m}^{n}$, one may ask about the density of $E_{\widetilde{\widetilde{S}%
}(n,m)}$ in $E_{K_{m}^{n}}$:%
\begin{align*}
\frac{\left\vert E_{\widetilde{\widetilde{S}}(n,m)}\right\vert }{\left\vert
E_{K_{m}^{n}}\right\vert }  &  =\frac{\left(  m^{n+1}-m\right)  /2}{\left(
m^{n}\left(  m-1\right)  n/2\right)  }\\
&  =1/n,
\end{align*}

which is arbitrarily small as $n$ becomes large.

\subsection{In Retrospect}

Our initial guess, that $\widetilde{S}(n,m)$ is isomorphic to $S(n,m)$, proved
to be false in general, but we retained it in this presentation (for $n=2$) because

\begin{enumerate}
\item The counterexample for $n=3$ \& $m=3$ shows the subtlety of the problem, and

\item The proof for $n=2$ gave the solutions $\widetilde{\widetilde{v}}%
_{1}=v_{1}$, $\widetilde{\widetilde{v}}_{2}=\left(  v_{1}+v_{2}\right)
\operatorname{mod}m$ of the recurrence inherent in the proof of Theorem 2.
This suggested extending the formulas to $\widetilde{\widetilde{v}}%
_{k}^{\left(  n\right)  }$ as a function of $\left(  v_{1},v_{2}%
,...,v_{n}\right)  $. The resulting formulas are in the next section.
\end{enumerate}

\subsection{Formulas for $\widetilde{\widetilde{v}}_{n}$}

In the proof of Theorem 2 we have shown that
\[
\widetilde{\widetilde{v}}_{1}^{\left(  n\right)  }=\varphi_{1}^{\left(
n\right)  }\left(  v_{1},v_{2},...,v_{n}\right)  =v_{1}%
\]
and for $1<i\leq n,$ $\widetilde{\widetilde{v}}_{i}^{\left(  n\right)  }$
\begin{align*}
\widetilde{\widetilde{v}}_{i}^{\left(  n\right)  }  &  =\varphi_{i}^{\left(
n\right)  }\left(  v_{1},v_{2},...,v_{n}\right) \\
&  =\varphi_{i-1}^{\left(  n-1\right)  }\left(  \left(  v_{1}+v_{2}\right)
\operatorname{mod}m,\left(  v_{1}+v_{3}\right)  \operatorname{mod}%
m,...,\left(  v_{1}+v_{i}\right)  \operatorname{mod}m\right)  \text{.}%
\end{align*}

\begin{lemma}
$\forall n\geq i,\widetilde{\widetilde{v}}_{i}^{\left(  n\right)  }%
=\widetilde{\widetilde{v}}_{i}^{\left(  i\right)  }$

\begin{proof}
By induction on $i.$ For $i=1$, $\varphi_{1}^{\left(  n\right)  }\left(
v_{1},v_{2},...,v_{n}\right)  =v_{1}=$ $\varphi_{1}^{\left(  1\right)
}\left(  v_{1}\right)  .$ Assume the Lemma is true for $i-1\geq1$. Then if
$n>i>1$,
\begin{align}
&  \varphi_{i}^{\left(  n\right)  }\left(  v_{1},v_{2},...,v_{n}\right)
\nonumber\\
&  =\varphi_{i-1}^{\left(  n-1\right)  }\left(  \left(  v_{1}+v_{2}\right)
\operatorname{mod}m,\left(  v_{1}+v_{3}\right)  \operatorname{mod}%
m,...,\left(  v_{1}+v_{i}\right)  \operatorname{mod}m\right)  \\
&  \qquad\text{by Section 4.4,}\\
&  =\ \varphi_{i-1}^{\left(  i-1\right)  }\left(  \left(  v_{1}+v_{2}\right)
\operatorname{mod}m,\left(  v_{1}+v_{3}\right)  \operatorname{mod}%
m,...,\left(  v_{1}+v_{i}\right)  \operatorname{mod}m\right)  \nonumber\\
&  \qquad\text{by the inductive hypothesis,}\nonumber\\
&  =\ \varphi_{i}^{\left(  i\right)  }\left(  v_{1},v_{2},...,v_{i}\right)
\nonumber\\
&  \qquad\text{by Section 4.4 again.}\nonumber
\end{align}

\end{proof}
\end{lemma}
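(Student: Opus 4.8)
The plan is to induct on $i$, using the one-step recurrence for $\widetilde{\widetilde{v}}_{i}^{\left(n\right)}$ recorded in Section 4.4. The essential feature of that recurrence is that it lowers the superscript from $n$ to $n-1$ and the subscript from $i$ to $i-1$ at the same time, at the cost of replacing the argument $\left(v_{1},v_{2},\ldots,v_{n}\right)$ by the shifted tuple whose $k$-th entry is $\left(v_{1}+v_{k+1}\right)\operatorname{mod}m$. This tells me that the correct induction variable is $i$, not $n$: I would fix $i$ and establish the equality $\widetilde{\widetilde{v}}_{i}^{\left(n\right)}=\widetilde{\widetilde{v}}_{i}^{\left(i\right)}$ for all $n\geq i$ simultaneously.

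For the base case $i=1$, Section 4.4 gives $\widetilde{\widetilde{v}}_{1}^{\left(n\right)}=v_{1}$ for every $n\geq 1$, and in particular $\widetilde{\widetilde{v}}_{1}^{\left(1\right)}=v_{1}$, so the two sides agree irrespective of $n$. For the inductive step I would assume the statement for $i-1$, namely $\widetilde{\widetilde{v}}_{i-1}^{\left(n'\right)}=\widetilde{\widetilde{v}}_{i-1}^{\left(i-1\right)}$ for all $n'\geq i-1$. Given $n>i$ (the case $n=i$ being vacuous), I would apply the Section 4.4 recurrence to rewrite $\varphi_{i}^{\left(n\right)}\left(v_{1},\ldots,v_{n}\right)$ as $\varphi_{i-1}^{\left(n-1\right)}$ evaluated at the shifted tuple. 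Since $n-1\geq i-1$, the inductive hypothesis collapses this to $\varphi_{i-1}^{\left(i-1\right)}$ evaluated at the first $i-1$ shifted entries $\left(v_{1}+v_{2}\right)\operatorname{mod}m,\ldots,\left(v_{1}+v_{i}\right)\operatorname{mod}m$. Finally I would run the same recurrence backwards, this time in the case $n=i$, to identify the result with $\varphi_{i}^{\left(i\right)}\left(v_{1},\ldots,v_{i}\right)=\widetilde{\widetilde{v}}_{i}^{\left(i\right)}$, which closes the induction.

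I do not expect any substantial new idea here; the only delicate point is the bookkeeping of the two indices. One must keep the induction on $i$ while the recurrence shifts $n$, and invoke the recurrence in both directions (forward to descend to superscript $n-1$, then backward at superscript $i$). The hinge of the argument is that the hypothesis $n\geq i$ is exactly what yields $n-1\geq i-1$, which is precisely the condition needed to apply the inductive hypothesis to $\varphi_{i-1}^{\left(n-1\right)}$.
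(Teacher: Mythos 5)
Your proposal is correct and follows essentially the same route as the paper: induction on $i$, with the base case $\widetilde{\widetilde{v}}_{1}^{\left(n\right)}=v_{1}$, one forward application of the Section 4.4 recurrence to descend to $\varphi_{i-1}^{\left(n-1\right)}$ on the shifted tuple, the inductive hypothesis to collapse the superscript to $i-1$, and the recurrence run backwards at $n=i$ to recover $\varphi_{i}^{\left(i\right)}$. The bookkeeping point you flag (that $n\geq i$ gives $n-1\geq i-1$) is exactly the hinge of the paper's argument as well.
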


So, to express $\widetilde{\widetilde{v}}_{i}^{\left(  n\right)  }$ as a
function of $v_{1},v_{2},...,v_{n}$, we need only do it for $\widetilde
{\widetilde{v}}_{n}^{\left(  n\right)  }=\widetilde{\widetilde{v}}_{n}$.
Henceforth we shall drop the superscript, $\left(  n\right)  $, if the domain
is clear from context.

\begin{theorem}
$\widetilde{\widetilde{v}}_{n}=\varphi_{n}\left(  v_{1},v_{2},...,v_{n}%
\right)  =\left(  v_{n}+%
{\displaystyle\sum\limits_{i=1}^{n-1}}
2^{n-1-i}v_{i}\right)  \operatorname{mod}m$.

\begin{proof}
By induction on $n$. It is trivially true for $n=1$. Assume it is true for
$n\geq1$. Then%
\begin{align*}
&  \varphi_{n+1}\left(  v_{1},v_{2},...,v_{n+1}\right)  \\
&  =\varphi_{n}\left(  \left(  v_{1}+v_{2}\right)  \operatorname{mod}m,\left(
v_{1}+v_{3}\right)  \operatorname{mod}m,...,\left(  v_{1}+v_{n+1}\right)
\operatorname{mod}m\right)  \\
&  \qquad\text{by Section 4.4,}\\
&  =\left(  \left(  v_{1}+v_{n+1}\right)  +%
{\displaystyle\sum\limits_{i=1}^{n-1}}
2^{n-1-i}\left(  v_{1}+v_{i+1}\right)  \right)  \operatorname{mod}m\\
&  \qquad\text{by the inductive hypothesis,}\\
&  =\left(  v_{n+1}+\left(  v_{1}+%
{\displaystyle\sum\limits_{i=1}^{n-1}}
2^{n-1-i}v_{1}\right)  +%
{\displaystyle\sum\limits_{i=1}^{n-1}}
2^{n-1-i}v_{i+1}\right)  \operatorname{mod}m\\
&  =\left(  v_{n+1}+2^{n-1}v_{1}+%
{\displaystyle\sum\limits_{i=1}^{n-1}}
2^{\left(  n+1\right)  -1-\left(  i+1\right)  }v_{i+1}+\right)
\operatorname{mod}m\\
&  =\left(  v_{n+1}+%
{\displaystyle\sum\limits_{i=1}^{\left(  n+1\right)  -1}}
2^{\left(  n+1\right)  -1-i}v_{i}\right)  \operatorname{mod}m\text{.}%
\end{align*}

\end{proof}
\end{theorem}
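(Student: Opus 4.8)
The plan is to argue by induction on $n$, using as the engine the one-step recursion recorded in Section 4.4, namely
\[
\varphi_{n+1}(v_1,\ldots,v_{n+1}) = \varphi_n\bigl((v_1+v_2)\operatorname{mod}m,\,(v_1+v_3)\operatorname{mod}m,\,\ldots,\,(v_1+v_{n+1})\operatorname{mod}m\bigr).
\]
This peels the ambient dimension down by one, at the cost of replacing each later coordinate $v_{i+1}$ by $v_1+v_{i+1}$. The base case $n=1$ is immediate: $\varphi_1(v_1)=v_1$ matches the claimed formula, whose sum is empty.

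For the inductive step I would abbreviate $w_i := (v_1+v_{i+1})\operatorname{mod}m$ and feed the $w_i$ into the inductive hypothesis for $\varphi_n$, obtaining $\varphi_{n+1}(v_1,\ldots,v_{n+1}) = \bigl(w_n + \sum_{i=1}^{n-1} 2^{n-1-i} w_i\bigr)\operatorname{mod}m$. Substituting $w_i = v_1+v_{i+1}$ everywhere, the expression separates into a part collecting all the copies of $v_1$ and a part carrying the genuine coordinates $v_2,\ldots,v_{n+1}$; because everything happens modulo $m$, this distribution and regrouping is legitimate.

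The heart of the argument is the collapse of the $v_1$-contributions. The coefficient of $v_1$ is $1 + \sum_{i=1}^{n-1} 2^{n-1-i}$, and the geometric-series identity $\sum_{i=1}^{n-1} 2^{n-1-i} = 2^{n-1}-1$ is exactly what is needed: it forces that coefficient to equal $2^{n-1}=2^{(n+1)-1-1}$, which is precisely the coefficient the target formula predicts for the first coordinate in dimension $n+1$. The remaining terms $v_{n+1} + \sum_{i=1}^{n-1} 2^{n-1-i} v_{i+1}$ are then reindexed by $j=i+1$; since $2^{n-1-i} = 2^{(n+1)-1-(i+1)}$, they become $v_{n+1} + \sum_{j=2}^{n} 2^{(n+1)-1-j} v_j$, and adjoining the $2^{(n+1)-1-1} v_1$ term yields $\bigl(v_{n+1} + \sum_{i=1}^{n} 2^{(n+1)-1-i} v_i\bigr)\operatorname{mod}m$, completing the induction.

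I expect the only real obstacle to be bookkeeping rather than anything conceptual: keeping the index shift $i\mapsto i+1$ synchronized with the exponent shift $n-1-i \mapsto (n+1)-1-(i+1)$, and correctly tracking the split-off term $w_n=v_1+v_{n+1}$, which must donate its $v_1$ to the geometric sum (raising the count of $v_1$'s to $2^{n-1}$) and its $v_{n+1}$ to the leading coordinate of the new formula. Once those indices are handled carefully, the computation is routine.
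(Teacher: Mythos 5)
Your proposal is correct and follows essentially the same route as the paper's own proof: induction on $n$ driven by the Section 4.4 recursion, application of the inductive hypothesis to the shifted coordinates $(v_1+v_{i+1})\operatorname{mod}m$, collapse of the $v_1$-coefficient via $1+\sum_{i=1}^{n-1}2^{n-1-i}=2^{n-1}$, and the reindexing $i\mapsto i+1$. The only difference is cosmetic (your abbreviation $w_i$), so nothing further is needed.
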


\subsection{Inverting $\varphi\left(  v_{1},v_{2},...,v_{n}\right)  =\left(
\widetilde{\widetilde{v}}_{1},\widetilde{\widetilde{v}}_{2},...,\widetilde
{\widetilde{v}}_{n}\right)  $}

\begin{theorem}
$\varphi_{i}^{-1}\left(  \widetilde{\widetilde{v}}_{1},\widetilde
{\widetilde{v}}_{2},...,\widetilde{\widetilde{v}}_{n}\right)  =\left(
\widetilde{\widetilde{v}}_{i}-%
{\displaystyle\sum\limits_{j=1}^{i-1}}
\widetilde{\widetilde{v}}_{j}\right)  \operatorname{mod}m$.

\begin{proof}%
\begin{align*}
&  \left(  \varphi^{-1}\circ\varphi\right)  _{i,j}=\varphi_{i}\cdot\varphi
_{j}^{-1}\text{
\ \ \ \ \ \ \ \ \ \ \ \ \ \ \ \ \ \ \ \ \ \ \ \ \ \ \ \ \ \ \ \ \ \ \ \ \ \ \ \ \ \ \ \ \ \ \ \ \ \ \ \ }%
\\
&  =\left(  \overset{i}{\overbrace{2^{i-2},2^{i-3},...,1,1}},0,...,0\right)
\cdot\left(  \overset{j}{\overbrace{0,0,...,0,1}},-1,...,-1\right) \\
&  =\left\{
\begin{array}
[c]{ll}%
1 & \text{if }i=j\\
0 & \text{if }i\neq j
\end{array}
\right.  =\iota_{i,j}%
\end{align*}

\end{proof}
\end{theorem}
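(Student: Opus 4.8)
The plan is to recognize that $\varphi$ is actually a \emph{linear} map over the ring $\mathbb{Z}/m\mathbb{Z}$ and then to invert its matrix by a single direct computation. First I would combine Lemma 3 with Theorem 3 to write each coordinate in closed form: since $\widetilde{\widetilde{v}}_i^{(n)}=\widetilde{\widetilde{v}}_i^{(i)}$, Theorem 3 applied with $i$ in place of $n$ gives
\[
\widetilde{\widetilde{v}}_i=\Bigl(v_i+\sum_{k=1}^{i-1}2^{i-1-k}v_k\Bigr)\bmod m .
\]
Thus $\varphi$ acts on $(\mathbb{Z}/m\mathbb{Z})^n$ by the lower-triangular matrix $A$ with $A_{ii}=1$, $A_{ik}=2^{i-1-k}$ for $k<i$, and $A_{ik}=0$ for $k>i$ — exactly the row pattern $(2^{i-2},2^{i-3},\ldots,1,1,0,\ldots,0)$ recorded in the statement. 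Since $A$ is unitriangular its determinant is the unit $1$, so $A$ is invertible over $\mathbb{Z}/m\mathbb{Z}$ and its inverse is again an integer unitriangular matrix; the only task is to identify it.

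Next I would read off the claimed formula as an assertion about the inverse matrix $B$: it says $B_{ii}=1$, $B_{ij}=-1$ for $j<i$, and $B_{ij}=0$ for $j>i$, so that column $j$ of $B$ is $(0,\ldots,0,1,-1,\ldots,-1)$ as written in the author's display. To confirm $B=A^{-1}$ it suffices to check $AB=I$ entrywise, i.e.\ $(AB)_{ij}=\sum_k A_{ik}B_{kj}=\iota_{i,j}$. Because $A_{ik}$ is supported on $k\le i$ and $B_{kj}$ on $k\ge j$, the only surviving indices are $j\le k\le i$; in particular the sum is empty (hence $0$) when $i<j$, and reduces to $A_{ii}B_{ii}=1$ when $i=j$.

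The one genuine computation — and the step I expect to carry the whole argument — is the off-diagonal case $i>j$, where
\[
(AB)_{ij}=A_{ij}-\sum_{k=j+1}^{i}A_{ik}=2^{i-1-j}-\Bigl(1+\sum_{k=j+1}^{i-1}2^{i-1-k}\Bigr).
\]
Everything now rests on the geometric-series identity $1+\sum_{l=0}^{p-1}2^l=2^p$: taking $p=i-1-j$, the bracketed quantity collapses to exactly $2^{i-1-j}=A_{ij}$, so $(AB)_{ij}=0$. In words, each subdiagonal entry of $A$ equals the sum of all entries lying to its right in the same row, which is precisely what lets the string of $-1$'s in $B$ annihilate it. This identity is the crux; the rest is bookkeeping about which index ranges are nonempty.

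Finally I would dispose of two minor loose ends. One-sided invertibility is enough because over the commutative ring $\mathbb{Z}/m\mathbb{Z}$ the unitriangular matrices form a group, so $AB=I$ already forces $BA=I$; and the reduction $\widetilde{\widetilde{v}}_i^{(n)}=\widetilde{\widetilde{v}}_i^{(i)}$ supplied by Lemma 3 is what allows all sizes $n$ to be handled uniformly by the single triangular matrix $A$. The author's compact display $\varphi_i\cdot\varphi_j^{-1}=\iota_{i,j}$ is exactly the entrywise product of row $i$ of $A$ against column $j$ of $B$, so this plan reproduces that verification with the geometric cancellation made explicit.
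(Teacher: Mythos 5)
Your proposal is correct and follows essentially the same route as the paper: both treat $\varphi$ as the unitriangular matrix with rows $\left(2^{i-2},2^{i-3},\ldots,1,1,0,\ldots,0\right)$ and verify that pairing them with the columns $\left(0,\ldots,0,1,-1,\ldots,-1\right)$ of the claimed inverse yields $\iota_{i,j}$. Your write-up merely makes explicit the geometric-series cancellation $1+\sum_{l=0}^{p-1}2^{l}=2^{p}$ and the group-theoretic point that $AB=I$ forces $BA=I$, both of which the paper leaves implicit.
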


Note that this inversion is actually valid over the integers, $\mathbb{Z}$
(not just $\mathbb{Z}_{m}$), and for $n=\infty$.

When $m=2$ these formulas reduce to
\[
\widetilde{\widetilde{v}}_{n}=\left(  v_{n}+v_{n-1}\right)  \operatorname{mod}%
2
\]
and
\[
v_{n}=\left(
{\displaystyle\sum\limits_{j=1}^{n}}
\widetilde{\widetilde{v}}_{j}\right)  \operatorname{mod}2.
\]
The embedding of $S(n,2)$, a path of length $2^{n}$, into $K_{2}^{n}=Q_{n}$,
the graph of the $n$-cube, given by Theorem 2, turns out to be exactly the
Gray code mentioned in Section 3.0.1. The result is trivial for $n=1$,
$\varphi^{\left(  1\right)  }$ being the identity. If it is the Gray code for
$n-1\geq1$, then by the recurrence at the beginning of Section 4.4,
$\widetilde{\widetilde{v}}_{i}^{\left(  n\right)  }\left(  v_{1}%
,v_{2},...,v_{n}\right)  $ will be the same for the first half of $S(n,2) $ as
it was for $S(n-1,2)$ since $v_{1}=0=\widetilde{\widetilde{v}}_{1}$. For the
second half, we add $v_{1}=1$ to each component except the first, thereby
complimenting it and reversing the order. That is the (recursive) definition
of the Gray code.

\subsection{The Answer to an Old Question}

More than fifty years ago, we noticed the similarity between the natural (base
$2$) code that assigns to each $n$-tuple, $v$, of $0$s and $1$s the integer,
$\eta\left(  v\right)  $, it represents in base $2$, \textit{i.e.}
\[
\eta\left(  v\right)  =%
{\displaystyle\sum\limits_{i=1}^{n}}
2^{n-i}v_{i}\text{,}%
\]
and the Gray code (See Wikipedia). The Gray code is defined recursively as a
circular order on the vertices of the $n$-cube. The natural binary code can
also be described recursively. But can the Gray code be represented by a
formula, similar to that for $\eta\left(  v\right)  $? It seemed that some of
the coefficients in any such formula,%
\[
\gamma\left(  v\right)  =%
{\displaystyle\sum\limits_{i=1}^{n}}
c_{i}v_{i\text{,}}%
\]
would have to be negative. But then the formula would take negative, as well
as positive values, which presents a problem. We tried various ways to get
around this difficulty but without success. However, the desired formula drops
right out of our Example 3 and Section 4.5. In Example 3 we observed that
$S(n,2)$ is a path, with the vertices appearing in their natural (base 2)
order. At the end of Section 4.5 we noted that $\varphi^{\left(  n\right)
}:S(n,2)\rightarrow\widetilde{\widetilde{S}}(n,2)\subset K_{2}^{n}=Q_{n}$
carried $S(n,2)$ to the Gray code. With the inverse of $\varphi_{n}:\left\{
0,1\right\}  ^{n}\rightarrow\left\{  0,1\right\}  ^{n}$ we have our formula%
\begin{align*}
\gamma\left(  \widetilde{\widetilde{v}}\right)   &  =\eta\left(  \varphi
^{-1}\left(  \widetilde{\widetilde{v}}\right)  \right) \\
&  =%
{\displaystyle\sum\limits_{i=1}^{n}}
\left(
{\displaystyle\sum\limits_{j=1}^{i}}
\widetilde{\widetilde{v}}_{j}\left(  \operatorname{mod}2\right)  \right)
2_{\text{,}}^{n-i}\text{.}%
\end{align*}
The essence of this relationship between $\eta$ and $\gamma$ is in the
Wikipedia article "Gray Code" but presented in pseudocode rather than an
algebraic formula.

\subsection{Reverse Engineering}

In reviewing the literature on Sierpinski graphs, we were chagrined to realize
that a principle case of our title's question already had an answer: The
(generalized \& extended) Sierpinski graphs originated in work on the Tower of
Hanoi puzzle \cite{S-G-S}. A position of the Tower of Hanoi puzzle with $n$
discs may be represented by $v\in\left\{  0,1,2\right\}  ^{n}$, $v_{i}=j$
meaning that disc $i$ is on peg $j$. The position is uniquely determined by
the $n$-tuple since the discs on a peg must be stacked smaller on larger (the
radius is decreasing in $i$). When disc $i$ is moved from peg $j_{1}$ to
$j_{2}$, all discs on $j_{1}$ \& $j_{2}$ must be larger than disc $i$. Thus
the smaller discs must all be stacked on the third peg, $k\neq j_{1}$,$j_{2}$.
When such a move is made, only one coordinate of the representing $n$-tuple,
$v$, changes ($v_{i}$ goes from $j_{1}$ to $j_{2}$). So what we have done here
is reverse engineer the work of Scorer, Grundy \& Smith in that case. However,
even in that case there are interesting differences.

\subsection{The Tower of Hanoi Graph}

Let $T(n)$ be the Tower of Hanoi graph with $3$ pegs and $n$ discs. As
described in the previous paragraph, vertices represent positions and edges
represent the moving of a disc from one peg to another. Scorer, Grundy \&
Smith showed that $T(n)$ and $S(n,3)$ are different coordinatizations of the
same abstract graph. Therefore $T(n)$ and $\widetilde{\widetilde{S}}(n,3)$ are
isomorphic and both are subgraphs of $K_{3}^{n}$. So, is $T(n,3)=$
$\widetilde{\widetilde{S}}(n,3)$?

Figure 4 shows $T(n)$, $n=1,2,3$.
\[%
{\parbox[b]{2.9974in}{\begin{center}
\includegraphics[
trim=1.209095in 2.146092in 1.347592in 0.949783in,
height=3.9825in,
width=2.9974in
]%
{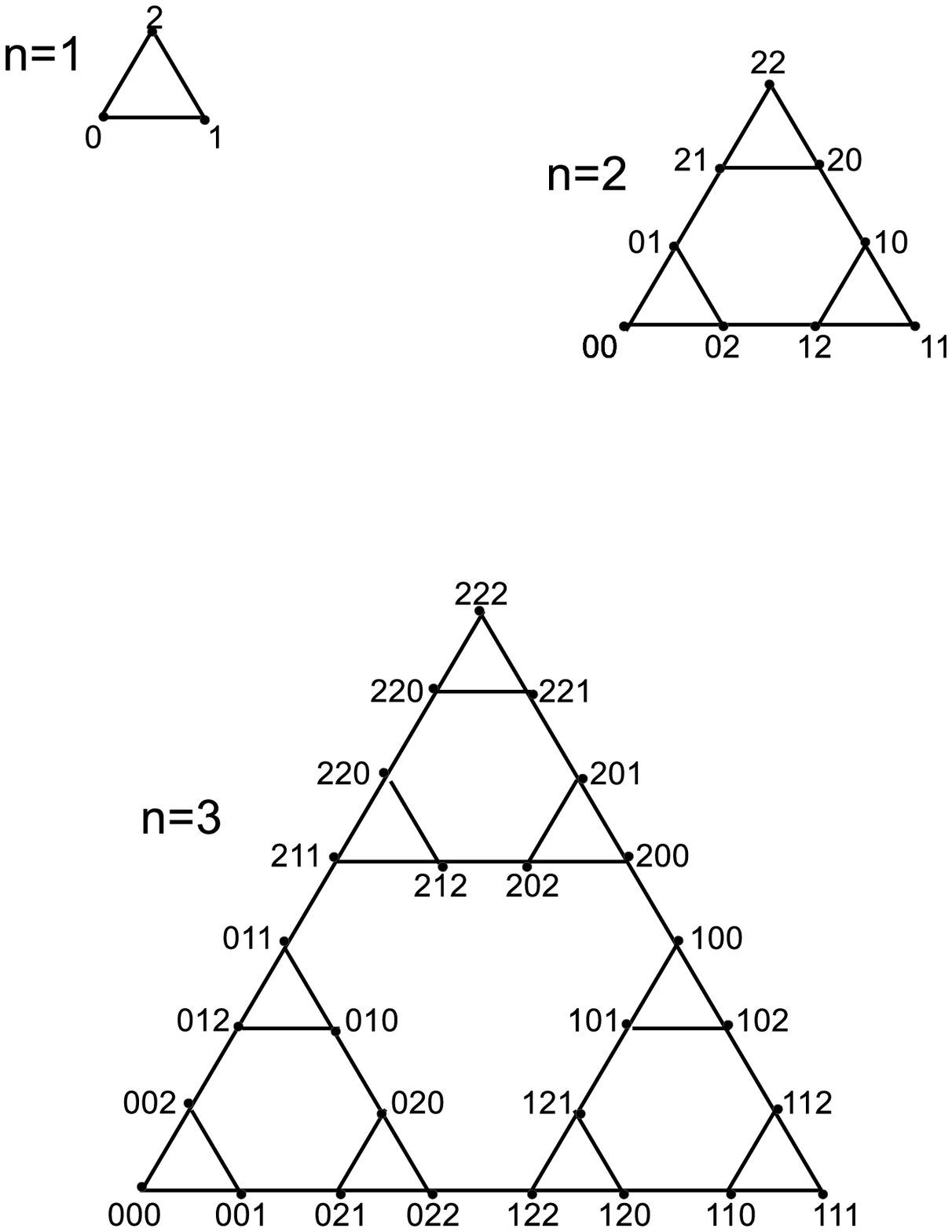}%
\\
Figure 4-$T(n)$ for $n=1,2,3$
\end{center}}}%
\]
A glance at diagrams of $\widetilde{\widetilde{S}}(n,3)$ \& $T(n)$ shows they
are not equal. Already for $n=2$ some coordinates are different. Is it
possible to change our definition of $\varphi$ a bit so that it takes $S(n,3)$
to $T(n)$?

$\tau:S(n,3)\rightarrow T(n)$ is defined recursively by the following slight
variant of $\varphi:S(n,3)\rightarrow\widetilde{\widetilde{S}}(n)$.%
\begin{align*}
\tau^{\left(  1\right)  } &  =\iota_{K_{3}}\text{, the identity on }%
K_{3}\text{,}\\
\text{For }n &  >1\text{, }\tau_{i}^{\left(  n\right)  }\left(  ij^{n-1}%
\right)  =\left(  i,\tau^{\left(  n-1\right)  }\left(  \left(  2\left(
i+j\right)  \operatorname{mod}3\right)  ^{n-1}\right)  \right)  \text{,}\\
\text{and }\tau^{\left(  n\right)  } &  =%
{\displaystyle\bigoplus\limits_{i=0}^{m-1}}
\tau_{i}^{\left(  n\right)  }\text{.}%
\end{align*}
The only difference is the factor of $2$ multiplying $\left(  i+j\right)
\operatorname{mod}3$. The diagram for $T(2)$ differs from that for
$\widetilde{\widetilde{S}}(2,3)$ by an interchange $1\longleftrightarrow2$ in
the second coordinate. This is due to the fact that when the large disc is
moved from peg $0$, the small disc must be on peg $1$ or peg $2$ and then the
large disc must go on the other peg ($2$ or $1$). Since $2\times0=0\left(
\operatorname{mod}3\right)  $, $2\times1=2\left(  \operatorname{mod}3\right)
$ and $2\times2=1\left(  \operatorname{mod}3\right)  $, multiplying $\left(
i+j\right)  $ by $2$ $\left(  \operatorname{mod}3\right)  $ exactly represents
this move.

\subsection{The Structure of $\tau$}

As with $\varphi$, $\tau$ may be viewed as a nonsingular linear transformation
$\tau\left(  v_{1},v_{2},...,v_{n}\right)  =\left(  t_{1},t_{2},...,t_{n}%
\right)  $. In particular,
\[
t_{1}=\tau_{1}\left(  v_{1},v_{2},...,v_{n}\right)  =v_{1}%
\]
and for $1<i\leq n,$
\begin{align*}
&  t_{i}=\tau_{i}^{\left(  n\right)  }\left(  v_{1},v_{2},...,v_{n}\right) \\
&  =\tau_{i-1}^{\left(  n-1\right)  }\left(  2\left(  v_{1}+v_{2}\right)
\operatorname{mod}3,2\left(  v_{1}+v_{3}\right)  \operatorname{mod}%
3,...,2\left(  v_{1}+v_{i}\right)  \operatorname{mod}3\right)  \text{.}%
\end{align*}
And, as before (Lemma 2 and then Theorem 3), we have

\begin{lemma}
$\forall n\geq i,\tau_{i}^{\left(  n\right)  }=\tau_{i}^{\left(  i\right)  }$,
\end{lemma}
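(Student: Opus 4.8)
The plan is to mirror the proof of Lemma 2 essentially line for line, replacing $\varphi$ by $\tau$ and the twist $j\mapsto(i+j)\operatorname{mod}m$ by the twist $j\mapsto2(i+j)\operatorname{mod}3$. The argument is an induction on $i$, and the whole point is that the recurrence displayed just above for $t_{i}=\tau_{i}^{(n)}(v_{1},\ldots,v_{n})$ expresses $\tau_{i}^{(n)}$ through $\tau_{i-1}^{(n-1)}$ evaluated at arguments $2(v_{1}+v_{2})\operatorname{mod}3,\ldots,2(v_{1}+v_{i})\operatorname{mod}3$ that involve only $v_{1},\ldots,v_{i}$. Because this list of arguments shrinks the superscript from $n$ to $n-1$ while keeping exactly the same dependence structure as in the $\varphi$ case, the induction proceeds identically.

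For the base case $i=1$ I would note that $\tau_{1}^{(n)}(v_{1},\ldots,v_{n})=v_{1}$ for every $n\geq1$, so in particular $\tau_{1}^{(n)}=\tau_{1}^{(1)}$. For the inductive step, assuming the claim for $i-1\geq1$ and taking any $n>i$ (the case $n=i$ being trivial), I would chain three equalities: first apply the recurrence to rewrite $\tau_{i}^{(n)}$ as $\tau_{i-1}^{(n-1)}$ of the twisted arguments; then, since $n-1\geq i-1$, invoke the inductive hypothesis to replace $\tau_{i-1}^{(n-1)}$ by $\tau_{i-1}^{(i-1)}$ on the same arguments; finally apply the recurrence once more, now at level $i$, to recognize the result as $\tau_{i}^{(i)}(v_{1},\ldots,v_{i})$. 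This is exactly the three-step chain from the proof of Lemma 2, with the appeals to Section 4.4 replaced throughout by appeals to the recurrence for $\tau$ in Section 4.9.

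There is essentially no genuine obstacle here; the only thing one must verify is that introducing the extra factor of $2$ and working modulo $3$ does not disturb the index bookkeeping. It does not, because the reduction of the superscript depends only on \emph{which} input coordinates the arguments of the twist involve --- namely $v_{1},\ldots,v_{i}$ --- and not on the particular permutation applied to them. Hence the conclusion $\tau_{i}^{(n)}=\tau_{i}^{(i)}$ for all $n\geq i$ follows by the same formal manipulation that established $\widetilde{\widetilde{v}}_{i}^{(n)}=\widetilde{\widetilde{v}}_{i}^{(i)}$ in Lemma 2, and it sets up the explicit closed formula for $\tau_{n}$ (the $\tau$-analog of Theorem 3) in the same way.
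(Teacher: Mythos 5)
Your proof is correct and matches the paper's intent exactly: the paper simply states that this lemma ``may be proved in exactly the same way'' as Lemma 2, and your induction on $i$ --- base case $\tau_{1}^{(n)}=v_{1}$, then the three-step chain through the recurrence, the inductive hypothesis, and the recurrence again --- is precisely that argument carried out with the twist $j\mapsto 2(i+j)\bmod 3$ in place of $j\mapsto(i+j)\bmod m$. Your observation that the index bookkeeping depends only on which coordinates the twisted arguments involve, not on the particular permutation applied, is exactly the right justification for why the transfer is harmless.
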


which may be proved in exactly the same way. So, to express $t_{i}$ as a
function of $v_{1},v_{2},...,v_{n}$, we need only do it for $\tau_{n}^{\left(
n\right)  }=t_{n}$.

\begin{theorem}
$t_{n}=\tau_{n}^{\left(  n\right)  }\left(  v_{1},v_{2},...,v_{n}\right)
=2^{n-1}\left(  v_{n}+%
{\displaystyle\sum\limits_{i=1}^{n-1}}
2^{n-1-i}v_{i}\right)  \operatorname{mod}3$, so $t_{n}=2^{n-1}\widetilde
{\widetilde{v}}_{n}\left(  \operatorname{mod}3\right)  $.
\end{theorem}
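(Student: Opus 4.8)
The plan is to run exactly the induction on $n$ that proves Theorem 3, carrying along one extra factor of $2$ at each recursive step. By Lemma 3 we may replace $\tau_{i}^{(n)}$ by $\tau_{i}^{(i)}$ throughout, so it suffices to identify $t_{n}=\tau_{n}^{(n)}$. The base case $n=1$ is immediate: $\tau_{1}^{(1)}(v_{1})=v_{1}$, while the claimed formula reads $2^{0}v_{1}=v_{1}\ (\operatorname{mod}3)$ since the sum is empty.

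For the inductive step I would assume the formula for $\tau_{n}$ and substitute into the $\tau$-recurrence displayed just above (the analogue of the Section 4.4 recurrence, but with the factor $2$ multiplying each $(v_{1}+v_{j})$) taken at $i=n+1$. This writes $\tau_{n+1}(v_{1},\ldots,v_{n+1})$ as $\tau_{n}$ evaluated at the shifted arguments $u_{j}=2(v_{1}+v_{j+1})\operatorname{mod}3$, for $1\le j\le n$. Applying the inductive hypothesis to $\tau_{n}(u_{1},\ldots,u_{n})$ produces a leading factor $2^{n-1}$ times the linear expression $u_{n}+\sum_{i=1}^{n-1}2^{n-1-i}u_{i}$. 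Since each $u_{j}$ carries its own factor of $2$ and enters this expression exactly once, those pull out to a single additional $2$, giving $2^{n}$ times $(v_{1}+v_{n+1})+\sum_{i=1}^{n-1}2^{n-1-i}(v_{1}+v_{i+1})$.

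The key observation is that this last bracketed expression is verbatim the one appearing midway through the proof of Theorem 3, where (over $\mathbb{Z}_{m}$, here $m=3$) it was shown to collapse to $v_{n+1}+\sum_{i=1}^{n}2^{n-i}v_{i}=\widetilde{\widetilde{v}}_{n+1}$. Citing that computation rather than repeating it gives $\tau_{n+1}(v)=2^{n}\widetilde{\widetilde{v}}_{n+1}\ (\operatorname{mod}3)$, which is precisely the asserted formula at $n+1$; the companion identity $t_{n}=2^{n-1}\widetilde{\widetilde{v}}_{n}\ (\operatorname{mod}3)$ then follows at once from Theorem 3.

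The one place to be careful is the accounting of powers of $2$: the $2^{n-1}$ coming from the inductive hypothesis multiplies with a \emph{single} factor of $2$ from each linear argument, so the exponent grows by exactly one per level rather than compounding. Beyond that bookkeeping, nothing in the argument differs from the proof of Theorem 3, so the induction goes through "as before".
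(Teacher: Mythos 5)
Your proof is correct and follows exactly the route the paper intends: the paper gives no separate argument for this theorem, remarking only that "the proofs of these theorems are essentially the same as for $\varphi$," i.e.\ the induction of Theorem 3 with one extra factor of $2$ pulled out of the recurrence at each level. Your write-up simply makes that sketch explicit, and the bookkeeping of the powers of $2$ and the reuse of the Theorem 3 collapse of $(v_{1}+v_{n+1})+\sum_{i=1}^{n-1}2^{n-1-i}(v_{1}+v_{i+1})$ are both accurate.
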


And, also as before (Theorem 4),

\begin{theorem}
$\tau_{n}^{-1}\left(  t_{1},t_{2},...,t_{n}\right)  =\left(  2^{n-1}t_{n}%
-\sum_{j=1}^{n-1}2^{j-1}t_{j}\right)  \operatorname{mod}3$.
\end{theorem}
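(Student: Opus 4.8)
The plan is to avoid a fresh induction and instead reduce Theorem~6 to the inversion formula for $\varphi$ already established in Theorem~4, using the proportionality between the two coordinatizations recorded in Theorem~5. The guiding observation is that, coordinate by coordinate, $\tau$ is just $\varphi$ rescaled by powers of $2$. First I would upgrade the relation $t_n \equiv 2^{n-1}\widetilde{\widetilde v}_n \pmod{3}$ of Theorem~5 to the full componentwise identity
\[
t_i \equiv 2^{i-1}\,\widetilde{\widetilde v}_i \pmod{3}, \qquad 1 \le i \le n .
\]
For $i = n$ this is Theorem~5 itself. For a general index $i$, Lemma~3 gives $t_i = \tau_i^{(n)}(v) = \tau_i^{(i)}(v_1,\dots,v_i)$, while Lemma~2 gives $\widetilde{\widetilde v}_i = \varphi_i^{(i)}(v_1,\dots,v_i)$; applying Theorem~5 with $n$ replaced by $i$ then yields the displayed relation.

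Second, I would invert this scalar relation one coordinate at a time. Since $2 \equiv -1 \pmod{3}$, each factor satisfies $2^{i-1}\cdot 2^{i-1} = 4^{i-1} \equiv 1 \pmod{3}$, so $2^{i-1}$ is its own multiplicative inverse modulo $3$ and hence
\[
\widetilde{\widetilde v}_i \equiv 2^{i-1} t_i \pmod{3}, \qquad 1 \le i \le n .
\]
Finally I would substitute into Theorem~4. Because $\tau_n^{-1}(t_1,\dots,t_n)$ recovers precisely $v_n$, and Theorem~4 with $m = 3$ and $i = n$ reads $v_n = \bigl(\widetilde{\widetilde v}_n - \sum_{j=1}^{n-1}\widetilde{\widetilde v}_j\bigr)\bmod 3$, replacing every $\widetilde{\widetilde v}_j$ by $2^{j-1}t_j$ gives $v_n = \bigl(2^{n-1}t_n - \sum_{j=1}^{n-1}2^{j-1}t_j\bigr)\bmod 3$, which is the claimed formula.

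The computations are routine; the only place demanding care --- the ``main obstacle,'' such as it is --- lies in the first step, where one must check that the proportionality $t_i = 2^{i-1}\widetilde{\widetilde v}_i$ holds for \emph{every} $i$ and not merely for $i=n$, since the inverse formula consumes all the lower coordinates as well. A fully self-contained alternative would imitate the proof of Theorem~4 verbatim: express $\tau_i$ and $\tau_j^{-1}$ as a row and a column vector over $\mathbb{Z}_3$ and verify directly that $(\tau^{-1}\circ\tau)_{i,j} = \iota_{i,j}$ by evaluating the dot product, the extra powers of $2$ introduced by the rescaling collapsing under $2^2 \equiv 1 \pmod{3}$ to reproduce the same Kronecker-delta pattern found in Theorem~4.
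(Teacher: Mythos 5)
Your argument is correct, but your primary route is genuinely different from the paper's. The paper dispatches Theorems 5 and 6 with the single remark that ``the proofs of these theorems are essentially the same as for $\varphi$ except we need the fact that $2^{2}=1\ (\operatorname{mod}3)$ for Theorem 6''; that is, it intends the same row-times-column verification of $\left(\tau^{-1}\circ\tau\right)_{i,j}=\iota_{i,j}$ used for Theorem 4, which is precisely the ``fully self-contained alternative'' you sketch in your last paragraph. Your main argument instead derives Theorem 6 as a corollary of results already on the books: you upgrade Theorem 5 to the componentwise scaling $t_{i}\equiv 2^{i-1}\widetilde{\widetilde{v}}_{i}\ (\operatorname{mod}3)$ --- correctly justified for every $i$, not just $i=n$, via Lemmas 2 and 3 together with Theorem 5 applied at index $i$ --- then invert each scalar factor using $4\equiv 1\ (\operatorname{mod}3)$ and substitute into Theorem 4 with $m=3$, $i=n$. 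This buys real economy: no fresh matrix computation, the role of $2^{2}\equiv 1$ is isolated in the single observation that each $2^{i-1}$ is its own inverse, and the relationship $\tau^{-1}=$ ``$\varphi^{-1}$ with rescaled coordinates'' becomes transparent (it also explains the self-inverse phenomenon of Example 7). Its only cost is that it leans on Theorem 5, which the paper likewise leaves unproved in detail, whereas the direct dot-product check is self-contained once the matrices for $\tau$ and $\tau^{-1}$ are written down. Both arguments are sound.
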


Again, the proofs of these theorems are essentially the same as for $\varphi$
except we need the fact that $2^{2}=1\left(  \operatorname{mod}3\right)  $ for
Theorem 6. $\tau^{-1}:T(n)\rightarrow S(n,3)$ is actually the original
Scorer-Grundy-Smith transformation.

The Tower of Hanoi is a fundamental example of a recursive algorithm. As such
it is assigned as an exercise in programming courses. The theorems above
provide a "hack" (illicit shortcut) for that exercise: In $S\left(
n,3\right)  $ the shortest path from $0^{n}$ to $1^{n}$ is $S\left(
n,2\right)  $, whose vertices appear in lexicographic order. So
\[
\tau\circ\eta^{-1}\left(  \ell\right)
\]
gives the solution of the classical Tower of Hanoi puzzle, moving discs from
$0^{n}$ to $1^{n\text{ }}$in $2^{n}-1$ steps. More specifically, the $i^{th}$
coordinate of the $\ell^{th}$ position is%
\[
\tau_{i}^{\left(  n\right)  }\left(  \ell\right)  =2^{i-1}\left(
{\displaystyle\sum\limits_{j=1}^{i-1}}
2^{i-1-j}\ell_{j}+\ell_{i}\right)  \operatorname{mod}3
\]
where
\[
\ell=\sum_{h=1}^{n}2^{n-h}\ell_{h}\text{, }\ell_{h}=0\text{ or }1\text{.}%
\]
(See, Dad, algebra can solve the Tower of Hanoi puzzle, and evidently the
Chinese Rings too!). Clifford Wolfe came up with a similar formula for
$2\tau_{i}^{\left(  n\right)  }\left(  \ell\right)  $
(http://www.clifford.at/hanoi/) by analysing the moves of individual discs:%
\[
2\tau_{i}^{\left(  n\right)  }\left(  \ell\right)  =\left(  \left(
i\operatorname{mod}2\right)  +1\right)  \left\lfloor \frac{\ell+2^{i}}%
{2^{i+1}}\right\rfloor \operatorname{mod}3\text{.}%
\]

\subsection{Generalization of the Embeddings $\varphi^{\left(  n\right)  }$,
$\tau^{\left(  n\right)  }$}

Implicit in the recursive definitions of $\varphi$ and $\tau$ are $m+1$
permutations $\pi_{i}\in\mathcal{S}_{m}$, $0\leq i\leq m$, : Let $\Pi
_{n}=\left(  \pi_{0},\pi_{1},...,\pi_{m}\right)  $ then
\[
\varepsilon_{\Pi_{n}}:S(n,m)\rightarrow K_{m}^{n}%
\]
is defined recursively by
\[
\varepsilon_{\Pi_{n}}^{\left(  1\right)  }=\iota_{K_{m}}\text{, the identity
on }K_{m},
\]
and given

$\varepsilon^{\left(  n-1\right)  }:S(n-1,m)\rightarrow K_{m}^{n-1}$,
$\varepsilon_{\Pi_{n},i}^{\left(  n\right)  }:\left(  i,S(n-1,m)\right)
\rightarrow\left(  \pi_{m}\left(  i\right)  ,K_{m}^{n-1}\right)  $ is defined
by
\begin{align*}
\varepsilon_{\Pi_{n},i}^{\left(  n\right)  }\left(  ij^{n}\right)   &
=\left(  \pi_{m}\left(  i\right)  ,\varepsilon^{\left(  n-1\right)  }\left(
\left(  \pi_{i}\left(  j\right)  \right)  ^{n}\right)  \right)  \text{ and
then}\\
\varepsilon_{\Pi_{n}}^{\left(  n\right)  }  &  =%
{\displaystyle\bigoplus\limits_{i=1}^{m}}
\varepsilon_{\Pi_{n},i}^{\left(  n\right)  }\text{.}%
\end{align*}

\begin{theorem}
$\bigskip$If $\varepsilon^{\left(  n-1\right)  }$ is an embedding and $\forall
i,j<m,$ $\pi_{i}\left(  j\right)  =\pi_{j}\left(  i\right)  $, then
$\varepsilon_{\Pi_{n}}^{\left(  n\right)  }$ is an embedding$.$
\end{theorem}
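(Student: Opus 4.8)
The plan is to follow the proof of Theorem~2 while isolating the exact point at which the hypothesis $\pi_i(j)=\pi_j(i)$ is needed; the statement is precisely the inductive step, so no outer induction is required once $\varepsilon^{(n-1)}$ is assumed to be an embedding. First I would replace the recursion, which only prescribes the value of $\varepsilon_{\Pi_n}^{(n)}$ on the corners $ij^{n-1}$ of the copy $\{i\}\times S(n-1,m)$, by a formula valid on the whole copy. By Theorem~1 the permutation $\pi_i$ extends to a unique automorphism $\sigma_{\pi_i}$ of $S(n-1,m)$, and the recursion then reads
\[
\varepsilon_{\Pi_n}^{(n)}(i,w)=\bigl(\pi_m(i),\,\varepsilon^{(n-1)}(\sigma_{\pi_i}(w))\bigr)
\]
for every $w\in V_{S(n-1,m)}$; one checks this agrees with the stated corner values because $\sigma_{\pi_i}(j^{n-1})=(\pi_i(j))^{n-1}$.

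Next I would dispose of injectivity and of the internal edges, neither of which uses the symmetry hypothesis. For injectivity, equality of two images forces equality of the first coordinates, so $\pi_m(i)=\pi_m(i')$ gives $i=i'$ as $\pi_m\in\mathcal{S}_m$; the remaining coordinates then agree, and since $\varepsilon^{(n-1)}$ is injective and $\sigma_{\pi_i}$ is a bijection, the two arguments coincide. For an internal edge $\{(i,w),(i,w')\}$ the restriction of $\varepsilon_{\Pi_n}^{(n)}$ to $\{i\}\times S(n-1,m)$ is $\varepsilon^{(n-1)}\circ\sigma_{\pi_i}$, an automorphism of $S(n-1,m)$ followed by an embedding, hence itself an embedding; its two images share the first coordinate $\pi_m(i)$ and differ in exactly one of the remaining coordinates.

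The crux is the external edges $\{ij^{n-1},ji^{n-1}\}$ with $i\ne j$, and this is the one place the hypothesis is indispensable. Here
\[
\varepsilon_{\Pi_n}^{(n)}(ij^{n-1})=\bigl(\pi_m(i),\varepsilon^{(n-1)}((\pi_i(j))^{n-1})\bigr),\qquad \varepsilon_{\Pi_n}^{(n)}(ji^{n-1})=\bigl(\pi_m(j),\varepsilon^{(n-1)}((\pi_j(i))^{n-1})\bigr).
\]
Because $i\ne j$ and $\pi_m$ is a permutation, the first coordinates differ, so the pair is an edge of $K_m^n$ precisely when the remaining $n-1$ coordinates coincide, i.e. when $\varepsilon^{(n-1)}((\pi_i(j))^{n-1})=\varepsilon^{(n-1)}((\pi_j(i))^{n-1})$. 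Injectivity of $\varepsilon^{(n-1)}$ turns this into $\pi_i(j)=\pi_j(i)$, which is exactly the hypothesis; thus the two images differ only in the first coordinate and the external edges are preserved.

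Assembling the three parts, $\varepsilon_{\Pi_n}^{(n)}$ is injective on vertices and carries every internal and every external edge of $S(n,m)$ to a pair of vertices of $K_m^n$ differing in exactly one coordinate, which is what it means to be an embedding. I expect the genuine content to sit entirely in the external-edge identity, the symmetry condition emerging as the sole constraint and specializing correctly to $\pi_i(j)=(i+j)\bmod m$ for $\varphi$ and to $\pi_i(j)=2(i+j)\bmod 3$ for $\tau$; the one step requiring real care is the passage from corner values to the closed form on a whole copy, since that is where Theorem~1 must be invoked to guarantee $\varepsilon_{\Pi_n}^{(n)}$ is even well defined.
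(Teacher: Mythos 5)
Your proposal is correct and follows exactly the route the paper intends: the paper states this theorem without proof, but it is the direct generalization of the proof of Theorem~2, where the only role of commutativity $(i+j)=(j+i)$ is played here by the hypothesis $\pi_i(j)=\pi_j(i)$ in the external-edge computation, and Theorem~1 is invoked in the same way to extend the corner values $\varepsilon_{\Pi_n,i}^{(n)}(ij^{n-1})$ to all of $\{i\}\times S(n-1,m)$. Your explicit treatment of injectivity is slightly more careful than the paper's, but the decomposition into internal edges, external edges, and well-definedness is identical.
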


\begin{example}
For $\varphi^{\left(  n\right)  }$, $\pi_{m}=\iota_{K_{m}}$ and for $i,j<m$,
$\pi_{i}\left(  j\right)  =\left(  i+j\right)  \operatorname{mod}m$. For
$i<m$, $\pi_{i}\left(  j\right)  =\pi_{j}\left(  i\right)  $ since $i+j=j+i$.
\end{example}

\begin{example}
For $\tau^{\left(  n\right)  }$, $\pi_{m}=\iota_{K_{m}}$ and for $i,j<m$,
$\pi_{i}\left(  j\right)  =2\left(  i+j\right)  \operatorname{mod}m$ ($m$ must
be odd in order for $j\rightarrow$ $2\left(  i+j\right)  \operatorname{mod}m$
to be a permutation).
\end{example}

Note that in each of these examples $\pi_{m}=\iota_{K_{m}}$. We may always
assume, by way of normalizing to eliminate redundancy, that $\pi_{m}%
=\iota_{K_{m}}$. If $\pi_{m}\neq\iota_{K_{m}}$, the symmetry induced by
$\pi_{m}^{-1}$ will put it in that form.

Thus for each $\varepsilon^{\left(  n-1\right)  }$ we can take $\pi_{i}\left(
j\right)  $ to be $c\left(  i+j\right)  \operatorname{mod}m$, where
gcd$\left(  c,m\right)  =1$. There are $\phi\left(  m\right)  $\ ways ($\phi$
is the Euler totient function) to choose $c$, so the number of nonisomorphic
embeddings $\varepsilon^{\left(  n\right)  }:S(n,m)\rightarrow K_{m}^{n}$ we
have constructed is $\left(  \phi\left(  m\right)  \right)  ^{n}$. In the
words of Hardy \& Wright, the order of $\phi(n)$ is "always `nearly $n$'" (See
Wikepedia, "Euler's Totient Function (Growth of the Function)") so for any
$\delta>0$, for $m$ suficiently large there are at least $m^{\left(
1-\delta\right)  n}$ such embeddings.

\subsection{Constant Corners Property}

One of the striking features of $S(n,m)$ $\&$ $T(n)$ is that corners (vertices
of degree $m-1$) are constant $\left(  i^{n}\text{ for some }i\text{, }0\leq
i<m\right)  $. In $\widetilde{\widetilde{S}}(n,3)$ there is only one constant
corner, $0^{n}$, the two other corners alternating $0$ \& $1$. Also, our
mapping $\tau:S(n,3)\rightarrow T(n)$ actually preserves the coordinates of
corners ($\tau\left(  i^{n}\right)  =i^{n}$). Can $T(n)=T(n,3)$ be generalized
to $T(n,m)$, a subgraph of $K_{m}^{n}$, isomorphic to $S(n,m)$ and with
constant corners? The recursive definition of $\tau:S(n,3)\rightarrow
K_{3}^{n}$ that characterizes $T(n)$ as its range, may be extended to
$\tau:S(n,m)\rightarrow K_{m}^{n}$ for any odd $m$ by letting $c=2^{-1}\left(
\operatorname{mod}m\right)  $ in the previous section. Then%
\[
\tau_{i}^{\left(  n\right)  }\left(  ij^{n-1}\right)  =\left(  i,2^{-1}%
\tau^{\left(  n-1\right)  }\left(  \left(  \left(  i+j\right)  \left(
\operatorname{mod}m\right)  \right)  ^{n-1}\right)  \left(  \operatorname{mod}%
m\right)  \right)
\]
since multiplication by $2$ is invertible $\left(  \operatorname{mod}m\right)
$ and $2^{-1}=\frac{m+1}{2}\left(  \operatorname{mod}m\right)  $ (note that
$2^{-1}=2\left(  \operatorname{mod}3\right)  $, so it made no difference that
we used $2$ instead of $2^{-1}$ in Section 4.8, since $m=3$ there)$.$ By
induction, the $T(n,m)$ so defined has constant corners since $2^{-1}\left(
i+i\right)  =i\left(  \operatorname{mod}m\right)  $. However, the definition
does not work for even $m$ because $2$ does not have a multiplicative inverse
$\operatorname{mod}m$. $\widetilde{\widetilde{S}}\left(  n,2\right)  $
trivially has constant corners, but we can prove that there is no subgraph of
$K_{4}^{2}$ isomorphic to $S(2,4)$ with constant corners: (By contradiction)
Assume that $T(2,4)$ is such a subgraph with corners $00,11,22,33$. The
$K_{4}s$ that constitute the $K_{4}$-covering of $T(2,4)$ must lie on parallel
lines (rows or columns) of $K_{4}^{2}$. And each of those $K_{4}s$ contain
exactly one corner. If the $K_{4}s$ are rows, then the exterior edges must lie
in the columns (or \textit{vice versa}). But with one of the vertices in each
column being a corner (and not incident to an exterior edge) the $3$ remaining
vertices can only accommodate $1$ edge. Four columns then yield at most $4$
exterior edges but $T(2,4)\simeq S(2,4)$ has $\binom{4}{2}=6$ exterior edges,
a contradiction. The same argument shows that $T(2,m)$ does not exist for any
even $m>2$. The question of $T(n,m)$ for even $m>2$ and $n>2$ remains open.
Also open is the question as to whether, for odd $m$, any other
(nonisomorphic) embeddings with constant corners exist.

\subsection{Tower of Hanoi on m Pegs}

How to generalize the Tower of Hanoi puzzle to $m$ pegs, $m>3$? Obviously, if
we add more pegs and retain the same rules, the puzzle just becomes easier to
solve (although the minimum number of moves, even for $m=4$ (and arbitrary
$n$), has never been completely determined. The Frame-Stewart algorithm, which
with 4 pegs takes about $\sqrt{2n+1}$ moves, is conjectured to be optimal (see
Wikipedia, "Tower of Hanoi (Four pegs and beyond)". Scorer, Grundy \& Smith
\cite{S-G-S} presented several variants. The one we found most amusing is
called "Traveling Diplomats". The additional rules they propose for moving
discs are described in terms of arcane diplomatic protocols for moving English
diplomats from Praha to Geneva via two airlines that circulate (with flights
in both directions) through $5$ major capitols, Berlin, Praha, Rome, Geneva \&
London. So $m=5$ and the number of diplomats, $n$, is arbitrary. However,
diplomats are strictly ordered by rank and (per Scorer, Grundy \& Smith)

\begin{description}
\item[a] "Each member must always travel three stages by air, for consultation
at the two intermediate towns.

\item[b] No member may start from, visit, or end up in a town at which one of
his subordinates is stationed."
\end{description}

How was the transfer most quickly done?"

The routes of the two airlines are presented by a "map" but there is an
equivalent (and more helpful) diagram in Figure 5.
\[%
{\parbox[b]{2.2174in}{\begin{center}
\includegraphics[
trim=2.207469in 3.006730in 1.908381in 3.818943in,
height=2.1179in,
width=2.2174in
]%
{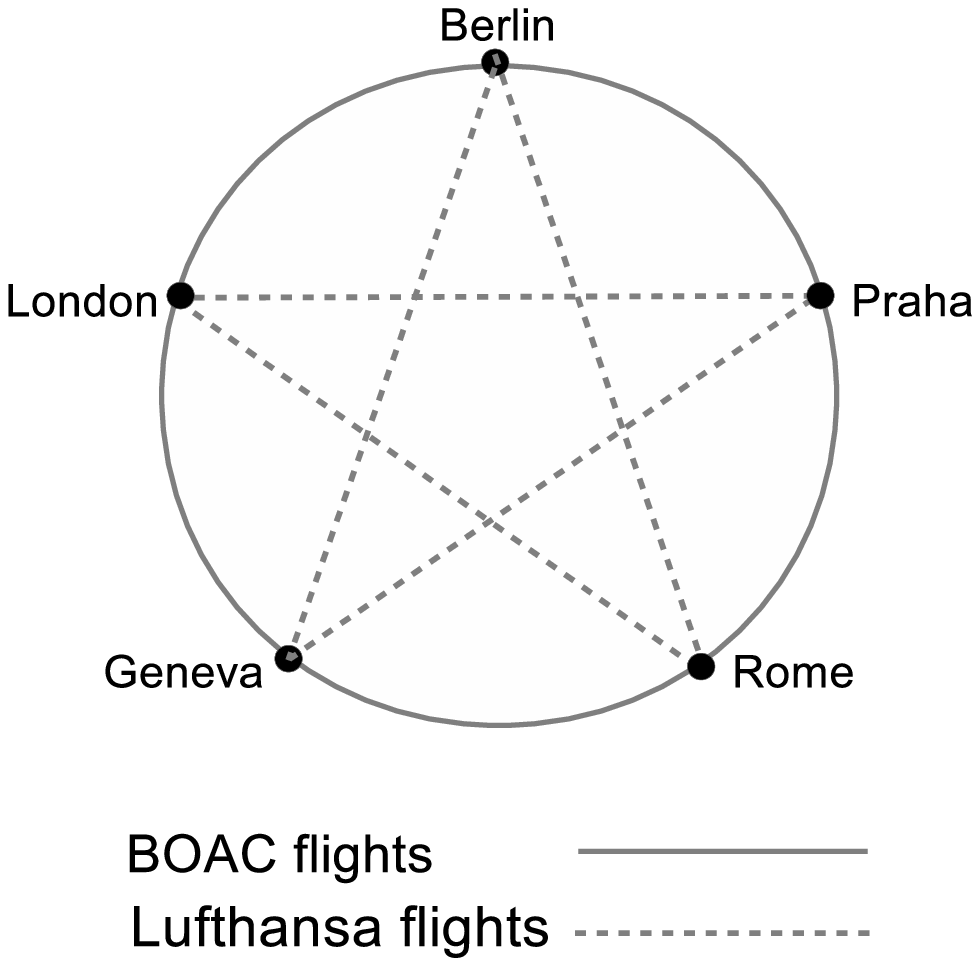}%
\\
Figure 5-Map of airline routes
\end{center}}}%
\]
With this map it is easy to see that

\begin{enumerate}
\item Any move, from town X to town Y, can be made in 3 stages on a unique airline.

\item There is just one town unvisited at each move, so by Rule $\mathbf{b}$,
all subordinates of the transient diplomat must be at that (unvisited)town.
\end{enumerate}

From this one may deduce that a move from $ik^{n-1}$ to $jk^{n-1}$, $i\neq j$
can be made iff $k=2^{-1}\left(  i+j\right)  $ which means that the graph of
the Traveling Diplomats puzzle is $T(n,5)$. Scorer, Grundy \& Smith also noted
that the rules as they state them could be extended to $T(n,p)$ for any prime
$p$.

\bigskip What other extension of the Towers of Hanoi might there be? The
conditions that seem necessary to us are

\begin{description}
\item[a] The graph should be isomorphic to $S(n,m)$,

\item[b] It should be a subgraph of $K(n,m)$, and

\item[c] It should have constant corners.
\end{description}

We have constructed such graphs, $T(n,m)$, for all $n$ if $m$ is odd, and
shown they do not exist for $n=2$ if $m>2$ is even. In our opinion, any such
$T(n,m)$ defines a generalized Towers of Hanoi puzzle: A disc may be moved
from peg $i$ iff all smaller discs are on peg $k\neq i$ and then it may be
only be moved to peg $j=\left(  2k-i\right)  \operatorname{mod}m$. It follows
then that $j\neq k$ and $k=2^{-1}\left(  i+j\right)  \operatorname{mod}m$,
making the graph of the game $T(n,m)$.

Since we have altered $\tau$ here, replacing $2\operatorname{mod}3$ by
$2^{-1}\operatorname{mod}m$ for any odd $m$, the calculation of the components
for $\tau:S(n,m)\rightarrow T(n,m)$ will look a little different:

\begin{theorem}
$t_{n}=\tau_{n}\left(  v_{1},v_{2},...,v_{n}\right)  =\left(  2^{-\left(
n-1\right)  }v_{n}+%
{\displaystyle\sum\limits_{i=1}^{n-1}}
2^{-i}v_{i}\right)  \operatorname{mod}m$, so $t_{n}=2^{n-1}\widetilde
{\widetilde{v}}_{n}\left(  \operatorname{mod}3\right)  $.
\end{theorem}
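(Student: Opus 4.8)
The plan is to prove the formula by induction on $n$, in exact parallel with the proofs of Theorem 3 and Theorem 5, the only change being that the multiplier $2$ is replaced throughout by $2^{-1}\pmod m$ (which exists because $m$ is odd, the standing assumption here). The base case $n=1$ is immediate: $\tau_{1}(v_{1})=v_{1}$, and the asserted formula gives $2^{0}v_{1}=v_{1}$. For the inductive step I would invoke the recurrence coming from the recursive definition of $\tau$ just above the theorem (the analogue of the displays in Sections 4.4 and 4.11), namely
\[
\tau_{n+1}(v_{1},\ldots,v_{n+1})=\tau_{n}\!\left(2^{-1}(v_{1}+v_{2}),\,2^{-1}(v_{1}+v_{3}),\ldots,2^{-1}(v_{1}+v_{n+1})\right)\pmod m .
\]
Applying the inductive hypothesis to the right-hand side, each argument $2^{-1}(v_{1}+v_{j+1})$ is weighted by $2^{-i}$ (or by $2^{-(n-1)}$ for the last one), so every $v$-coefficient picks up an extra factor $2^{-1}$, and I then collect terms according to the variable they involve.

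The one step that needs genuine care is the coefficient of $v_{1}$, which is assembled from all of the arguments at once. Here the bookkeeping collapses as
\[
2^{-n}+\sum_{i=1}^{n-1}2^{-(i+1)}=2^{-n}+\left(2^{-1}-2^{-n}\right)=2^{-1},
\]
using the geometric sum $\sum_{k=2}^{n}2^{-k}=2^{-1}-2^{-n}$. After this cancellation the $v_{1}$ term is exactly $2^{-1}v_{1}$; the surviving $v_{j+1}$ terms re-index to $\sum_{k=2}^{n}2^{-k}v_{k}$, and the final argument contributes $2^{-n}v_{n+1}$. Together these give $\sum_{k=1}^{n}2^{-k}v_{k}+2^{-n}v_{n+1}$, which is precisely the claimed formula with $n+1$ in place of $n$, completing the induction.

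Finally, for the concluding identity I would multiply the established formula through by $2^{n-1}$ and compare with Theorem 3: since $2^{n-1}\cdot 2^{-(n-1)}=1$ and $2^{n-1}\cdot 2^{-i}=2^{n-1-i}$, one obtains $2^{n-1}t_{n}=v_{n}+\sum_{i=1}^{n-1}2^{n-1-i}v_{i}=\widetilde{\widetilde{v}}_{n}$, that is, $t_{n}=2^{-(n-1)}\widetilde{\widetilde{v}}_{n}\pmod m$. When $m=3$ we have $2^{-1}\equiv 2$, hence $2^{-(n-1)}\equiv 2^{n-1}$, which recovers the mod-$3$ form written in the statement (and is the source of the residual ``$\pmod 3$'' appearing there). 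The main obstacle is entirely the geometric-series collapse of the $v_{1}$ coefficient displayed above; once that is in hand, the remainder is routine re-indexing identical in form to Theorems 3 and 5, so I would present it briefly and refer back to those proofs rather than repeat the algebra in full.
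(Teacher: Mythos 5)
Your proof is correct and follows exactly the route the paper intends: the paper states this theorem without an explicit proof, remarking only that the argument is the same as for Theorems 3 and 5 with the multiplier $2$ replaced by $2^{-1}\pmod m$, and your induction (with the geometric-series collapse of the $v_{1}$-coefficient, the exact analogue of the collapse $1+\sum_{i=1}^{n-1}2^{n-1-i}=2^{n-1}$ used in Theorem 3) supplies precisely those details. You are also right that the trailing ``$\operatorname{mod}3$'' in the statement is a leftover from the $m=3$ case; the general identity is $t_{n}=2^{-(n-1)}\widetilde{\widetilde{v}}_{n}\pmod m$.
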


And, also as before (Theorem 4),

\begin{theorem}
$v_{n}=\tau_{n}^{-1}\left(  t_{1},t_{2},...,t_{n}\right)  =\left(
2^{n-1}t_{n}-\sum_{j=1}^{n-1}2^{j-1}t_{j}\right)  \operatorname{mod}m$.
\end{theorem}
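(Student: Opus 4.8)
The plan is to prove Theorem 9 with essentially no fresh computation, by reducing the inversion of $\tau_n$ to the inversion of $\varphi_n$ already carried out in Theorem 4. The bridge is a single change-of-variable identity relating the $T(n,m)$-coordinates $t_i$ to the $\widetilde{\widetilde{S}}(n,m)$-coordinates $\widetilde{\widetilde{v}}_i$, namely $\widetilde{\widetilde{v}}_i=2^{i-1}t_i\pmod m$. Once this is in hand, the formula for $\tau_n^{-1}$ will drop out of the formula for $\varphi_n^{-1}$ by direct substitution.

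First I would record the coordinate formulas for every index, not just $i=n$. By the $\tau$-analogue of Lemma 2 (the reduction $\tau_i^{(n)}=\tau_i^{(i)}$, proved verbatim as for $\varphi$), Theorem 8 applied with $i$ in place of $n$ gives $t_i=\bigl(2^{-(i-1)}v_i+\sum_{j=1}^{i-1}2^{-j}v_j\bigr)\operatorname{mod}m$ for each $i$. Factoring $2^{-(i-1)}$ out of this expression and comparing with the formula $\widetilde{\widetilde{v}}_i=\bigl(v_i+\sum_{j=1}^{i-1}2^{i-1-j}v_j\bigr)\operatorname{mod}m$ from Theorem 3 yields $t_i=2^{-(i-1)}\widetilde{\widetilde{v}}_i$, i.e. $\widetilde{\widetilde{v}}_i=2^{i-1}t_i\pmod m$. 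This is the one place where oddness of $m$ genuinely enters: it is exactly what guarantees that $2$ is a unit mod $m$, so that $2^{-(i-1)}$ makes sense and the relation is a bona fide linear change of coordinates.

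Then I would simply substitute the bridge identity into Theorem 4. Since $v_n=\varphi_n^{-1}(\widetilde{\widetilde{v}}_1,\dots,\widetilde{\widetilde{v}}_n)=\bigl(\widetilde{\widetilde{v}}_n-\sum_{j=1}^{n-1}\widetilde{\widetilde{v}}_j\bigr)\operatorname{mod}m$, replacing each $\widetilde{\widetilde{v}}_j$ by $2^{j-1}t_j$ gives $v_n=\bigl(2^{n-1}t_n-\sum_{j=1}^{n-1}2^{j-1}t_j\bigr)\operatorname{mod}m$, which is precisely the claimed formula.

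As an independent check, and in the spirit of the matrix/dot-product verification used for Theorem 4, I would also confirm directly that the proposed inverse composes with $\tau$ to the identity: substituting the formulas for $t_j$ into the right-hand side, the coefficient of each $v_l$ with $l<n$ collapses to $1+(2^{n-1-l}-1)=2^{n-1-l}$ by the geometric-series identity $\sum_{p=0}^{N}2^p=2^{N+1}-1$, and these contributions are cancelled exactly by the $2^{n-1}t_n$ term, leaving $v_n$. I do not anticipate a real obstacle here; the only points requiring care are the routine bookkeeping of the powers of $2$ and the summation indices, together with the standing assumption that $m$ is odd so that $2^{-1}\pmod m$ exists.
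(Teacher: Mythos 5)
Your proof is correct, but it takes a genuinely different route from the one the paper intends. The paper gives no written proof of Theorem 9: it points back to Theorem 4, whose proof is the row-times-column verification that the coefficient vectors of $\tau_n$ and $\tau_n^{-1}$ multiply to the identity --- exactly the computation you relegate to your ``independent check,'' and which does go through via the geometric-series cancellation $1+(2^{n-1-l}-1)=2^{n-1-l}$ that you describe. Your primary argument is instead a change of variables: you observe that $\tau$ is $\varphi$ followed by the diagonal rescaling $t_i=2^{-(i-1)}\widetilde{\widetilde{v}}_i\ (\operatorname{mod}m)$, so $\tau^{-1}$ is the corresponding rescaling composed with $\varphi^{-1}$, and Theorem 9 falls out of Theorem 4 by substituting $\widetilde{\widetilde{v}}_j=2^{j-1}t_j$. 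This is cleaner and more informative: it isolates the only new ingredient (invertibility of $2$ modulo odd $m$), it reuses Theorem 4 rather than redoing its computation, and as a bonus it corrects the stray clause ``$t_n=2^{n-1}\widetilde{\widetilde{v}}_n\ (\operatorname{mod}3)$'' appended to Theorem 8, which for general odd $m$ should read $t_n=2^{-(n-1)}\widetilde{\widetilde{v}}_n\ (\operatorname{mod}m)$. The one dependency worth flagging is that your bridge identity needs the per-coordinate form of Theorem 8 (for every index $i$, not just $i=n$), which in turn needs the $\tau$-analogue of Lemma 2 for the modified $\tau$; you invoke this explicitly and its proof is indeed verbatim the same, so the argument is complete.
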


\subsection{A New Beginning}

The Tower of Hanoi puzzle, as proposed by Eduard Lucas in 1881, started with
all $n$ discs on one peg (say $1$) and ended with all of them on another peg
(say $0$). Some puzzles, such as Rubik's Cube, are started from a random
position. If the Tower of Hanoi is started from a random position, it would
appear to be a much more complex problem.

\begin{example}
Let $n=4$ and start at $1201$. The goal is to reach $0^{4}=0000$ in the
minimum number of moves.
\end{example}

It would be simple to select a position at random by labeling the sides of a
triangular prism $0,1,2$ and tossing it $n$ times to generate a random member
of $\left\{  0,1,2\right\}  ^{n}$. In the example, starting at $1201$, the
largest disc starts on peg $1$ and must be moved to peg $0$. In order to do
that the three smaller discs must be moved to peg $2$. After the largest disc
is transferred from peg $1$ to peg $0$, the three smaller discs may be
transferred from peg $2$ to peg $0$. Thus the problem is recursively solvable
and the same holds in general. However, deciding the optimal move from a given
position, such as $1201$ seems overwhelming.

\begin{lemma}
$\forall v\in S(n,m),\exists!$ minimum length path from $v$ to $0^{n}$. Its
length is
\[
\ell\left(  v\right)  =\sum_{i=1}^{n}\overline{v}_{i}2^{n-i}%
\]
where
\[
\overline{v}_{i}=\left\{
\begin{array}
[c]{ll}%
0 & \text{if }v_{i}=0\\
1 & \text{if }v_{i}\neq0
\end{array}
\right.  \text{.}%
\]

\end{lemma}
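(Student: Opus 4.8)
The plan is to treat the quantity $\ell(v)=\sum_{i=1}^{n}\overline{v}_i 2^{n-i}$ as a \emph{potential function} on the vertices of $S(n,m)$ and to show that it must drop by exactly $1$ at each step of a geodesic toward $0^{n}$. The single computation driving everything is the change of $\ell$ across an arbitrary edge. An edge $\{u,v\}$ of $S(n,m)$ has a level $h\in\{1,\dots,n\}$ with $u_i=v_i$ for $i<h$, $u_h\neq v_h$, and $u_j=v_h$, $v_j=u_h$ for $j>h$. Writing $a=u_h$, $b=v_h$ (and $\overline{a},\overline{b}$ for the indicators $[a\neq 0],[b\neq 0]$), the coordinates $h+1,\dots,n$ of $u$ all equal $b$ while those of $v$ all equal $a$, so
\[
\ell(u)-\ell(v)=(\overline{a}-\overline{b})\Big(2^{\,n-h}-\sum_{j=h+1}^{n}2^{\,n-j}\Big)=(\overline{a}-\overline{b})\big(2^{\,n-h}-(2^{\,n-h}-1)\big)=\overline{u}_h-\overline{v}_h .
\]
The geometric-series identity $\sum_{j=h+1}^{n}2^{\,n-j}=2^{\,n-h}-1$ is precisely what makes the two large terms cancel down to $\pm1$, yielding the uniform bound $|\ell(u)-\ell(v)|\le 1$ across every edge and every level. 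This cancellation is the crux of the argument.

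Given this, the lower bound is immediate: along any path from $v$ to $0^{n}$ the increments of $\ell$ telescope to $\ell(0^{n})-\ell(v)=-\ell(v)$, while each increment has absolute value at most $1$, so the path has at least $\ell(v)$ edges. Since $\ell(u)=0$ if and only if $u=0^{n}$, it remains only to produce a path of length exactly $\ell(v)$ and to show it is the unique geodesic.

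For both existence and uniqueness I would show that every vertex $u\neq 0^{n}$ has a \emph{unique} neighbor $w$ with $\ell(w)=\ell(u)-1$; by the displayed formula such a decreasing edge is one whose level $h$ has $\overline{u}_h=1$ and $\overline{w}_h=0$. Splitting on the last coordinate: if $u_n\neq 0$, the only decreasing move is the interior (level-$n$) edge that resets $u_n$ to $0$, every other interior move leaving $\ell$ unchanged or larger; if $u_n=0$ but $u\neq 0^{n}$, no interior move decreases $\ell$, and the unique exterior neighbor of $u$—which exists at the level $h$ equal to the largest index with $u_h\neq 0$, since then $u_{h+1}=\dots=u_n=0$—is the one decreasing move. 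The two cases are mutually exclusive, so exactly one decreasing neighbor exists in either case. Following these forced moves strictly decreases the nonnegative integer $\ell$ and hence terminates at the unique vertex with $\ell=0$, namely $0^{n}$; this realizes a path of length $\ell(v)$, and because any geodesic must lower $\ell$ by $1$ at every step it can only follow these forced moves, which gives uniqueness.

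The main obstacle I anticipate lies in that final case analysis: one must verify carefully that the interior and exterior decreasing moves never coexist (they require $u_n\neq 0$ and $u_n=0$, respectively) and that the exterior move, when present, is genuinely the only exterior option—this is exactly where the Basics' observation that each non-corner vertex has precisely one exterior neighbor is invoked. The potential computation itself is routine once the geometric-series cancellation is spotted, and the lower bound then follows with no further work.
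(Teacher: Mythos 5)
Your argument is correct, and its engine is the same as the paper's: the paper proves the lemma by induction on $\ell(v)$, splitting into exactly your two cases ($v_{n}\neq 0$, handled by the interior edge that zeroes the last coordinate, and $v_{n}=0$ with $v\neq 0^{n}$, handled by the unique exterior edge at the last nonzero coordinate), and in each case exhibits a neighbor $u$ with $\ell(u)=\ell(v)-1$. Where you genuinely go further is the uniform identity $\ell(u)-\ell(v)=\overline{u}_{h}-\overline{v}_{h}$ across an arbitrary edge of level $h$, obtained from the cancellation $\sum_{j=h+1}^{n}2^{n-j}=2^{n-h}-1$. From it you get, by telescoping, that every path from $v$ to $0^{n}$ has at least $\ell(v)$ edges, and that a geodesic must decrease $\ell$ by exactly $1$ at every step, which forces it onto the unique decreasing neighbor and yields uniqueness. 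The paper's proof simply asserts that the unique shortest path must begin with the constructed edge; it never verifies that no other neighbor lies closer to $0^{n}$, nor that $\ell$ bounds the distance from below, so your potential-function computation supplies precisely the justification the paper leaves implicit. Your final case check is also sound: when $u_{n}\neq 0$ the exterior neighbor $w$ at level $h$ has $w_{h}=u_{n}\neq 0$, hence $\ell(w)-\ell(u)=1-\overline{u}_{h}\geq 0$, so the interior and exterior decreasing moves indeed never coexist. In short: same descent, but your writeup is the more complete of the two.
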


\begin{proof}
By induction on $\ell\left(  v\right)  $. If $\ell\left(  v\right)  =0$,
$v=0^{n\text{ }}$ and the result is trivial. If true for $\ell\left(
u\right)  =\ell>0$ and $\ell\left(  v\right)  =\sum_{i=1}^{n}\overline{v}%
_{i}2^{n-i}=\ell+1$ then there are two cases:

\begin{description}
\item[Case $1$] $\overline{v}_{n}\neq0\Rightarrow v_{n}\neq0$, so replace
$v_{n}$ by $0$ to get $u\in V_{S(n,m)}$. By the definition of $S(n,m)$,
$\left\{  u,v\right\}  \in E_{S(n,m)}$ and $\sum_{i=1}^{n}\overline{u}%
_{i}2^{n-i}=\ell$. Thus the unique shortest path from $v$ to $0^{n}$ starts
with $\left\{  u,v\right\}  $ and continues with the unique shortest path from
$u$ to $0^{n}.$

\item[Case $2$] $\overline{v}_{n}=0\Rightarrow v_{n}=0\Rightarrow\exists!h<n$
such that $v_{h}\neq0$ and $v_{j}=0$ for $j>h$. Let $u\in V_{S(n,m)}$ be such
that%
\[
u_{i}=\left\{
\begin{array}
[c]{ll}%
v_{i} & \text{if }i<h\\
0 & \text{if }i=h\\
v_{h} & \text{if }i>h
\end{array}
\right.  \text{.}%
\]
Again $\left\{  u,v\right\}  \in E_{S(n,m)}$ and $\sum_{i=1}^{n}\overline
{u}_{i}2^{n-i}=\ell$ so we are done.
\end{description}
\end{proof}

Thus the antipodes of $0^{n}$ $\in V_{S(n,m)}$ are those $v$ such that
$\forall i,v_{i}\neq0$. There are $\left(  m-1\right)  ^{n}$ of them, each at
distance $2^{n}-1$ from $0^{n}$. Since $T\left(  n\right)  $ is isomorphic to
$S(n,3)$, we can use the correspondence $\tau:S(n,3)\rightarrow T(n)$ and its
inverse to solve the Tower of Hanoi problem with random starting position.

\begin{example}
For $n=4$, $\tau\left(  v_{1},v_{2},v_{3},v_{4}\right)  $ $=$%
\[
\left(  v_{1},2\left(  v_{1}+v_{2}\right)  \operatorname{mod}3,4\left(
2v_{1}+v_{2}+v_{3}\right)  \operatorname{mod}3,8\left(  4v_{1}+2v_{2}%
+v_{3}+v_{4}\right)  \operatorname{mod}3\right)
\]
in general so the matrix for $\tau$ is
\[%
\begin{array}
[c]{llll}%
1 & 0 & 0 & 0\\
2 & 2 & 0 & 0\\
2 & 1 & 1 & 0\\
2 & 1 & 2 & 2
\end{array}
\text{.}%
\]
Surprisingly, this matrix is self-inverse, so it is also the matrix for
$\tau^{-1}\left(  t_{1},t_{2},t_{3},t_{4}\right)  $. Therefore
\[
\tau^{-1}\left(  1,0,2,0\right)  =\left(
\begin{array}
[c]{llll}%
1 & 0 & 0 & 0\\
2 & 2 & 0 & 0\\
2 & 1 & 1 & 0\\
2 & 1 & 2 & 2
\end{array}
\right)  \left(
\begin{array}
[c]{l}%
1\\
0\\
2\\
0
\end{array}
\right)  =\left(
\begin{array}
[c]{l}%
1\\
2\\
1\\
0
\end{array}
\right)  \text{.}%
\]
$\ell\left(  1,2,1,0\right)  =1\cdot2^{4-1}+1\cdot2^{4-2}+1\cdot2^{4-3}%
+0\cdot2^{4-4}=14$. The vertex one step closer to $0^{4}$ in $S\left(
4,3\right)  $ is $\left(  1,2,0,1\right)  $ and the one after that is $\left(
1,2,0,0\right)  $. Continuing on in this way we generate the minimum length
path from $\left(  1,2,1,0\right)  $ to $0^{4}$ in $S\left(  4,3\right)  $.
From there we apply $\tau$ to obtain the minimum length path from $\left(
1,0,2,0\right)  $ to $0^{4}$ in $T(4,3)$:
\[%
\begin{tabular}
[c]{|r||cr}\hline
$\ell$ & $S\left(  4,3\right)  $ & \multicolumn{1}{|r|}{$T\left(  4,3\right)
$}\\\hline
$14$ & \multicolumn{1}{||r}{$1210$} & $1020$\\\hline
$13$ & \multicolumn{1}{||r}{$1201$} & $1010$\\\hline
$12$ & \multicolumn{1}{||r}{$1200$} & $1011$\\\hline
$11$ & \multicolumn{1}{||r}{$1022$} & $1211$\\\hline
$10$ & \multicolumn{1}{||r}{$1020$} & $1210$\\\hline
$9$ & \multicolumn{1}{||r}{$1002$} & $1220$\\\hline
$8$ & \multicolumn{1}{||r}{$1000$} & $1222$\\\hline
$7$ & \multicolumn{1}{||r}{$0111$} & $0222$\\\hline
$6$ & \multicolumn{1}{||r}{$0110$} & $0220$\\\hline
$5$ & \multicolumn{1}{||r}{$0101$} & $0210$\\\hline
$4$ & \multicolumn{1}{||r}{$0100$} & $0211$\\\hline
$3$ & \multicolumn{1}{||r}{$0011$} & $0010$\\\hline
$2$ & \multicolumn{1}{||r}{$0010$} & $0012$\\\hline
$1$ & \multicolumn{1}{||r}{$0001$} & $0001$\\\hline
$0$ & \multicolumn{1}{||r}{$0000$} & $0000$\\\hline
\end{tabular}
\]

\end{example}

\subsection{Another Formula}

We can also write a formula to solve the Traveling Diplomats puzzle of
\cite{S-G-S}: Suppose we take $n=4$ and number the capitols, Praha
$\rightarrow0$, Geneva $\rightarrow1$, Paris $\rightarrow2$, Rome
$\rightarrow3$, and London $\rightarrow4$ in the order of the BOAC circuit.
The challenge is to transport all four diplomats from Praha to Geneva, so we
start at $0^{4\text{ \ }}$and end at $1^{4}$ on $T(4,5).$ In $S(4,5)$ the
minimum path is $\eta^{-1}\left(  \ell\right)  $, $0\leq\ell\leq15$ and
mapping that path to $T(4,5)$ by $\tau$, whose matrix of coefficients $\left(
\operatorname{mod}5\right)  $ is
\[
\left(
\begin{array}
[c]{rrrr}%
1 & 0 & 0 & 0\\
3 & 3 & 0 & 0\\
3 & 4 & 4 & 0\\
3 & 4 & 2 & 2
\end{array}
\right)
\]
we get%
\[%
\begin{tabular}
[c]{|r||rr}\hline
$\ell$ & $S(4,5)$ & \multicolumn{1}{|r|}{$T(4,5)$}\\\hline
$0$ & $0000$ & $0000$\\\hline
$1$ & $0001$ & $0002$\\\hline
$2$ & $0010$ & $0042$\\\hline
$3$ & $0011$ & $0044$\\\hline
$4$ & $0100$ & $0344$\\\hline
$5$ & $0101$ & $0341$\\\hline
$6$ & $0110$ & $0331$\\\hline
$7$ & $0111$ & $0333$\\\hline
$8$ & $1000$ & $1333$\\\hline
$9$ & $1001$ & $1330$\\\hline
$10$ & $1010$ & $1320$\\\hline
$11$ & $1011$ & $1322$\\\hline
$12$ & $1100$ & $1122$\\\hline
$13$ & $1101$ & $1124$\\\hline
$14$ & $1110$ & $1114$\\\hline
$15$ & $1111$ & $1111$\\\hline
\end{tabular}
\]
The matrix of coefficients $\left(  \operatorname{mod}5\right)  $ for
$\tau^{-1}:$ $T(4,5)\rightarrow S(4,5)$ is
\[
\left(
\begin{array}
[c]{rrrr}%
1 & 0 & 0 & 0\\
4 & 2 & 0 & 0\\
4 & 3 & 4 & 0\\
4 & 3 & 1 & 3
\end{array}
\right)
\]
which might be used to recall a random distribution of the four diplomats
among the 5 capitols to back to Praha in the most efficient manner.

The nonzero entries of the matrices in these two examples have pseudorandom
characteristics since they are sums of exponentials reduced
$\operatorname{mod}m$ (see Wikipedia). It is amusing that they act to bring
order out of the apparent chaos of moves to solve Tower of Hanoi problems. The
key ingredient of this magic is the Scorer-Grundy-Smith transform, which takes
the combinatorial structure of $T(m,n)$ and maps it onto the geometric
structure of $S(m,n)$.

\subsection{Is $\tau:S(4,3)\rightarrow T(4,3)$ Self-inverse by Coincidence or
Principle?}

In Example 7 we noticed that $\tau=\tau^{-1}$. Is this just a coincidence or
the result of some underlying principle which make it true for all $n,m$? The
answer is somewhere in between: If $m=3$ it holds for all $n$. This can be
seen from Theorems 8 \& 9, since $2^{-1}=2=-1\left(  \operatorname{mod}%
3\right)  $. If $m=5$ it does not hold for $n=4$ (shown in Section 4.14) and
seems unlikely for any $n$ if $m>3$.

\subsection{Summing Up}

The two main things accomplished in this paper are:

\begin{enumerate}
\item Answering the question of the title in the affirmative, and

\item Making explicit what was implicit in Scorer, Grundy \& Smith's paper
\cite{S-G-S}. They described a "trilinear mapping" from $T(n)$ to $S(n,3)$ in
synthetic and qualitative terms. Utilizing the self-similarity of the
Sierpinski graph, we characterized that mapping by a recurrence and solved the
recurrence. What was synthetic and qualitative has become analytic,
computational and surprisingly efficient.
\end{enumerate}


\begin{thebibliography}{99}                                                                                               %


\bibitem {C-K-L-T}\bigskip Chilakamarri, Kiran B.; Khan, M. F.; Larson, C. E.
\& Tymczak, C. J.: Self-similar Graphs. arXiv.1310.2268v1 [math.CO] 8 Oct 2013.

\bibitem {G-J}Garey, Michael R.; Johnson, David S.: \underline{Computers and
intractability. A guide to the theory} \underline{of NP-completeness}. W. H.
Freeman and Co., (1979). x+338 pp. ISBN: 0-7167-1045-5.

\bibitem {Har04}Harper, L. H.; \underline{Global Methods for Combinatorial
Isoperimetric Problems}. Cambridge Studies in Advanced Mathematics
\textbf{90}. Cambridge University Press, Cambridge (2004). xiv + 232 pp. ISBN: 0-521-83268-3.

\bibitem {Har15}Harper, L. H.; Maximum Type Stable $\ell$-sets of $Q_{n}$.
Preprint (2015). 9 pp.

\bibitem {H-K-M-P}Hinz, Andreas M.; Klav\v{z}ar, Sandi; Milutinovi\'{c},
Uro\v{s} and Petr, Ciril; \underline{The Tower of} \underline{Hanoi---Myths
and Maths}. With a foreword by Ian Stewart. Birkh\"{a}user/Springer Basel AG,
Basel (2013). xvi+335 pp. ISBN: 978-3-0348-0236-9; 978-3-0348-0237-6

\bibitem {Jak}Jakovac, Marko; A 2-parametric generalization of Sierpi\'{n}ski
gasket graphs. Ars Combin. $\boldsymbol{116}$ (2014), pp. 395--405.

\bibitem {Lip}Lipscomb, Stephen Leon; \underline{Fractals and Universal Spaces
in Dimension Theory}, Springer Monographs in Mathematics (2009). xviii+241 pp.
ISBN: 978-0-387-85493-9.

\bibitem {Mac}MacLane, Saunders; \underline{Categories for the Working
Mathematician}, Second edition, Graduate Texts in Mathematics \textbf{5},
Springer-Verlag, New York, (1998), xii+314 pp. ISBN: 0-387-98403-8.

\bibitem {Par}Parisse, Danielle; On Some Metric Properties of the Sierpinski
Graphs $S\left(  n,k\right)  $. Ars Combin. $\boldsymbol{90}$ (2009); pp 145-160.

\bibitem {S-G-S}Scorer, R.S.; Grundy, P.M. and Smith, C.A.B.; Some Binary
Games. Math. Gaz. 28 (1944); pp. 96-103.

\bibitem {W-R-R-S}William, Albert; Rajasingh, Indra; Rajan, Bharati \&
Shanthakumari, A.: Topological Properties of Sierpinski Gasket Pyramid
Network, in \underline{Informatics Engineering} \underline{and Information
Science}, Proceedings (Part III) of an international conference (ICIEIS 2011)
at Kuala Lumpur, Maylaysia, November 14-16, 2011, pp. 431-439, Springer-Verlag.
\end{thebibliography}
\end{document}